\documentclass{article}

\usepackage{amsmath}
\usepackage{amsthm}
\usepackage{amsxtra}
\usepackage{amsfonts,amssymb}
\setcounter{secnumdepth}{5}


\newtheorem{Th}{Theorem}
\newtheorem{Prop}{Proposition}
\newtheorem{Lm}[Prop]{Lemma}

\theoremstyle{definition}

\newtheorem{Rem}{Remark}

 \abovedisplayskip 8pt \abovedisplayskip 8pt
\belowdisplayskip 9pt

\date{}
\title{Schur–Weyl duality for the unitary groups of ${\rm II}_1$-factors}

\author{Nessonov ~N.~I. \footnote{This research was supported in part by the grant “Network of Mathematical
Research 2013--2015}}

\begin{document}
\maketitle
\begin{abstract}
We obtain the analogue of  Schur-Weyl duality for the unitary group of an arbitrary ${\rm II}_1$-factor
\end{abstract}
\section{Preliminaries.}
Let $\mathcal{M}$ be  a separable ${\rm II}_1$-factor , let $U\left(\mathcal{M} \right)$ be its unitary group and let ${\rm tr}$ be a unique normalized normal trace on $M$. Denote by $\mathcal{M}^\prime$ commutant of $\mathcal{M}$. Assume that $\mathcal{M}$ acts on $L^2\left(\mathcal{M}, {\rm tr}\right)$ by  left multiplication: $\mathfrak{L}(a)\eta=a\eta$, where $a\in \mathcal{M}$, $\eta\in L^2\left(\mathcal{M}, {\rm tr}\right)$. Then $\mathcal{M}^\prime$ coincides with the set of the operators that act on $L^2\left(\mathcal{M}, {\rm tr}\right)$ by right multiplication: $\mathfrak{R}(a)\eta=\eta a,\;$ where  $\eta\in L^2\left(\mathcal{M}, {\rm tr}\right)$, $a\in\mathcal{M} $.   Let $\mathfrak{S}_p$  be the symmetric group of the $n$ symbols $1$, $2$, $\ldots$, $p$. Take $u\in U(\mathcal{M})$  and define  the operators $\mathfrak{L}^{\otimes p}(u)$  and $\mathfrak{R}^{\otimes p}(u)$ on $L^2\left(\mathcal{M}, {\rm tr}\right)^{\otimes p}$ as
 follows
\begin{eqnarray*}
&\mathfrak{L}^{\otimes p}(u) \left(x_1\otimes x_2\otimes\cdots\otimes x_p \right)=ux_1\otimes ux_2\otimes\cdots\otimes ux_p,&\\
&\mathfrak{R}^{\otimes p}(u) \left(x_1\otimes x_2\otimes\cdots\otimes x_p \right)=x_1u^*\otimes x_2u^*\otimes\cdots\otimes x_pu^*,\\
&\text{ where } \;\;x_1,x_2,\ldots,x_p\in L^2\left(\mathcal{M}, {\rm tr}\right).
\end{eqnarray*}
Obviously the operators $\mathfrak{L}^{\otimes p}(u)$ and $\mathfrak{R}^{\otimes p}(u)$, where $u\in U(\mathcal{M})$, form the unitary representations of the group $U(\mathcal{M})$.
Also, we define the representation $\mathcal{P}_p$ of $\mathfrak{S}_p$ that acts on $L^2\left(\mathcal{M}, {\rm tr}\right)^{\otimes p}$ by
\begin{eqnarray}\label{Unitary_permutation}
\mathcal{P}_p(s)\left(x_1\otimes x_2\otimes\cdots\otimes x_p \right)=x_{s^{-1}(1)}\otimes x_{s^{-1}(2)}\otimes\cdots\otimes x_{s^{-1}(p)}, s\in\mathfrak{S}_p.
\end{eqnarray}

Denote by ${\rm Aut}\, \mathcal{M}$ the automorphism group of factor
$\mathcal{M}$. Let $\theta_p^s$ be the automorphism of factor $\mathcal{M}^{\otimes p}$ that acts as follows
\begin{eqnarray}\label{aut_permutation}
\theta_p^s(a)=\mathcal{P}_p(s)a\mathcal{P}_p(s^{-1}), \text{ where } s\in\mathfrak{S}_p, a\in \mathcal{M}^{\otimes p}\cup\mathcal{M}^{\prime\otimes p}.
\end{eqnarray}

Let  $\mathcal{A}$ be  the set of the operators  on Hilbert space $H$, let $\mathcal{N}_\mathcal{A}$ be the smallest von Neumann algebra containing  $\mathcal{A}$, and let $\mathcal{A}^\prime$ be a commutant of $\mathcal{A}$. By von Neumann's bicommutant theorem
$
\mathcal{N}_\mathcal{A}=\left\{ \mathcal{A}^\prime \right\}^\prime=\mathcal{A}^{\prime\prime}$.

 Set $\left(\mathcal{M}^{\otimes p} \right)^{\mathfrak{S}_p}$ = $\left\{ a\in\mathcal{M}^{\otimes p}: \theta_p^s(a)=a \text{ for all } s\in\mathfrak{S}_p \right\}$.

 The irreducible representations of $\mathfrak{S}_p$ are indexed by the partitions\footnote{A partition $\lambda=\left(\lambda_1,\lambda_2,\ldots  \right)$ is a weakly decreasing sequence of non-negative integers $\lambda_j$, such that $\sum\lambda_j=p$. As usual, we write $\lambda$ $\vdash p$.} of $p$. Let $\lambda$ be a partition of $p$, and let $\chi^\lambda$ be the character of the corresponding irreducible representation $T^\lambda$. If ${\rm dim}\,\lambda$ is the dimension of $T^\lambda$, then operator $P_p^\lambda=\frac{{\rm dim}\,\lambda}{p\,!}\sum\limits_{s\in\mathfrak{S}}\chi^\lambda(s)\mathcal{P}_p(s)$ is the orthogonal projection on  $L^2\left(\mathcal{M}, {\rm tr}\right)^{\otimes p}$. Denote by $\Upsilon_p$ the set of all partitions of $p$.
The following statement is an analogue of the Schur-Weil duality.

\begin{Th}\label{main_theorem}
Fix the nonnegative  integer numbers $p$ and $q$. Let $\lambda$ and $\mu$ be the partitions from $\Upsilon_p$ and $\Upsilon_q$, respectively, and let $\Pi_{\lambda\mu}$ be the restriction of representation $\mathfrak{L}^{\otimes p}\otimes\mathfrak{R}^{\otimes q}$ to the subspace $\mathcal{H}_{\lambda\mu}=P_p^\lambda\otimes P_q^\mu \left(L^2\left(\mathcal{M}, {\rm tr}\right)^{\otimes p}\otimes L^2\left(\mathcal{M}, {\rm tr}\right)^{\otimes q}\right)$. The following properties are true.
\begin{itemize}
  \item {\rm (1)} $\left\{\mathfrak{L}^{\otimes p}\otimes\mathfrak{R}^{\otimes q}\left( U(\mathcal{M})\right)\right\}^{\prime\prime}=\left(\mathcal{M}^{\otimes p}\right)^{\mathfrak{S}_p}\otimes\left(\mathcal{M}^{\prime\otimes q} \right)^{\mathfrak{S}_q}$. In particular, the algebra $\left(\mathcal{M}^{\otimes p}\right)^{\mathfrak{S}_p}\otimes\left(\mathcal{M}^{\prime\otimes q} \right)^{\mathfrak{S}_q}$ is the finite factor.
  \item {\rm (2)} For any $\lambda$ and $\mu$ the representation  $\Pi_{\lambda\mu}$ is quasi-equivalent to $\mathfrak{L}^{\otimes p}\otimes\mathfrak{R}^{\otimes q}$.
  \item {\rm (3)} Let $\gamma\vdash p$ and $\delta\vdash q$. The representations $\Pi_{\lambda\mu}$ and $\Pi_{\gamma\delta}$ are unitary equivalent if and only if ${\rm dim}\,\lambda\cdot {\rm dim}\,\mu={\rm dim}\,\gamma\cdot{\rm dim}\,\delta$.
\end{itemize}
\end{Th}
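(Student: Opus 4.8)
\section*{Proof proposal}

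The plan is to reduce everything to part (1), whose content is that the diagonal representation $\mathfrak{L}^{\otimes p}\otimes\mathfrak{R}^{\otimes q}$ of $U(\mathcal{M})$ generates the whole tensor product. Throughout I abbreviate $\mathcal{A}=\left(\mathcal{M}^{\otimes p}\right)^{\mathfrak{S}_p}$, $\mathcal{B}=\left(\mathcal{M}^{\prime\otimes q}\right)^{\mathfrak{S}_q}$ and $\mathcal{D}=\left\{\mathfrak{L}^{\otimes p}\otimes\mathfrak{R}^{\otimes q}\left(U(\mathcal{M})\right)\right\}^{\prime\prime}$. Once (1) is known, both $\mathcal{A}\otimes\mathcal{B}=\mathcal{D}$ and its commutant will be ${\rm II}_1$-factors, and parts (2),(3) follow formally from the theory of finite factors: quasi-equivalence from factoriality of the commutant, and unitary equivalence from comparison of traces of projections.

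First I would treat the one-sided statement $\left\{\mathfrak{L}^{\otimes p}\left(U(\mathcal{M})\right)\right\}^{\prime\prime}=\mathcal{A}$ (and its mirror for $\mathfrak{R}^{\otimes q}$). The inclusion $\subseteq$ is immediate since each generator $\mathfrak{L}^{\otimes p}(u)$ is invariant under all $\theta_p^s$. For $\supseteq$ I would differentiate the one-parameter groups $t\mapsto\mathfrak{L}^{\otimes p}(e^{ith})$ to obtain the power sums $\sum_{j=1}^p 1\otimes\cdots\otimes h\otimes\cdots\otimes 1$ (with $h$ in the $j$-th place) inside the algebra, and then, by a non-commutative analogue of Newton's identities (taking products and separating the terms with pairwise distinct tensor slots by inclusion--exclusion), generate all symmetrised tensors, which are weakly dense in $\mathcal{A}$. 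The same reasoning identifies the commutant $\mathcal{A}^\prime=\mathcal{M}^{\prime\otimes p}\vee\mathcal{P}_p(\mathfrak{S}_p)\cong\mathcal{M}^{\prime\otimes p}\rtimes\mathfrak{S}_p$; since the permutation action $\theta_p^s$ on the factor $\mathcal{M}^{\otimes p}$ is outer, both $\mathcal{A}$ and $\mathcal{A}^\prime$ are ${\rm II}_1$-factors, and the canonical trace on the crossed product satisfies ${\rm tr}\left(\mathcal{P}_p(s)\right)=\delta_{s,e}$.

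The heart of the proof is the two-sided inclusion $\mathcal{A}\otimes\mathcal{B}\subseteq\mathcal{D}$, equivalently $\mathcal{D}^\prime=\mathcal{A}^\prime\otimes\mathcal{B}^\prime$. The inclusion $\mathcal{A}^\prime\otimes\mathcal{B}^\prime\subseteq\mathcal{D}^\prime$ is clear, since the right multiplications on the first $p$ tensor legs, the left multiplications on the last $q$ legs, and the permutations $\mathcal{P}_p,\mathcal{P}_q$ all commute with the diagonal. The difficulty---and the one place where the ${\rm II}_1$ hypothesis is decisive---is to show that $\mathcal{D}^\prime$ contains nothing more. Over a finite-dimensional $\mathcal{M}=M_n$ the analogous commutant is strictly larger: it contains the ``contraction'' operators of the walled Brauer algebra that pair a left leg with a right leg. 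Such a contraction is built from the copairing vector $\Omega_0=\sum_i\widehat{f_i}\otimes\widehat{f_i^{\,*}}\in L^2(\mathcal{M})\otimes L^2(\mathcal{M})$ for an orthonormal basis $\{f_i\}$ of $\mathcal{M}$, and is exactly the intertwiner between a left leg and the conjugate of a right leg; it corresponds to a Hilbert--Schmidt element of $\mathcal{M}$ or $\mathcal{M}^\prime$ acting on $L^2(\mathcal{M})$. Because $\|\Omega_0\|^2={\rm dim}_{\mathbb{C}}\,\mathcal{M}=\infty$ for a ${\rm II}_1$-factor---equivalently, the only Hilbert--Schmidt operator in $\mathcal{M}$ or $\mathcal{M}^\prime$ is $0$---no such bounded operator exists. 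I therefore expect the main obstacle to be a clean, rigorous version of this vanishing statement. A convenient way to package it is to note that $\mathcal{D}\subseteq\mathcal{N}:=\mathcal{M}^{\otimes p}\otimes\mathcal{M}^{\prime\otimes q}$, which acts in standard form on $L^2(\mathcal{M})^{\otimes(p+q)}$ with cyclic--separating trace vector $\xi_0$; it then suffices to prove $\overline{\mathcal{D}\,\xi_0}=\overline{(\mathcal{A}\otimes\mathcal{B})\,\xi_0}$, since the trace-preserving conditional expectation $\mathcal{A}\otimes\mathcal{B}\to\mathcal{D}$ then forces $\mathcal{D}=\mathcal{A}\otimes\mathcal{B}$. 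The equality of closures reduces in turn to showing that any $W\in\mathcal{A}\otimes\mathcal{B}$ with ${\rm tr}\left(W^*\left(u^{\otimes p}\otimes u^{*\otimes q}\right)\right)=0$ for all $u\in U(\mathcal{M})$ vanishes; the intertwiner associated with $W$ is Hilbert--Schmidt, which is precisely what excludes the finite-dimensional contraction solutions.

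Finally, granting (1), I would conclude as follows. Since $\mathcal{A}$ and $\mathcal{B}$ are ${\rm II}_1$-factors, $\mathcal{D}=\mathcal{A}\otimes\mathcal{B}$ is a finite factor and $\mathcal{D}^\prime=\mathcal{A}^\prime\otimes\mathcal{B}^\prime$ is a ${\rm II}_1$-factor as well. The projection $P_p^\lambda\otimes P_q^\mu$ lies in $\mathcal{D}^\prime$ and is non-zero, because $L^2(\mathcal{M})$ is infinite-dimensional and hence $L^2(\mathcal{M})^{\otimes p}$ contains every irreducible $T^\lambda$ of $\mathfrak{S}_p$. As $\mathcal{D}^\prime$ is a factor, this projection has central support $1$, so $\Pi_{\lambda\mu}$ is quasi-equivalent to $\mathfrak{L}^{\otimes p}\otimes\mathfrak{R}^{\otimes q}$, which is (2). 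For (3), two such subrepresentations are unitarily equivalent iff the corresponding projections are Murray--von Neumann equivalent in the finite factor $\mathcal{D}^\prime$, i.e. iff they have equal trace. Using $\mathcal{A}^\prime=\mathcal{M}^{\prime\otimes p}\rtimes\mathfrak{S}_p$ and ${\rm tr}\left(\mathcal{P}_p(s)\right)=\delta_{s,e}$ I obtain ${\rm tr}\left(P_p^\lambda\right)=\frac{{\rm dim}\,\lambda}{p!}\,\chi^\lambda(e)=\frac{({\rm dim}\,\lambda)^2}{p!}$, whence ${\rm tr}\left(P_p^\lambda\otimes P_q^\mu\right)=\frac{({\rm dim}\,\lambda\cdot{\rm dim}\,\mu)^2}{p!\,q!}$; the equivalence criterion then reads ${\rm dim}\,\lambda\cdot{\rm dim}\,\mu={\rm dim}\,\gamma\cdot{\rm dim}\,\delta$, which is exactly (3).
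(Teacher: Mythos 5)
Your reduction of part (1) to a cyclicity statement is legitimate (the conditional--expectation argument works, since the trace vector is separating for $\mathcal{M}^{\otimes p}\otimes\mathcal{M}^{\prime\otimes q}$ and hence for every subalgebra), and your derivation of (2)--(3) from (1) coincides with the paper's: factoriality of the commutant gives central support ${\rm I}$, hence quasi-equivalence, and unitary equivalence is decided by traces of the projections $P_p^\lambda\otimes P_q^\mu$ in the finite factor $\mathcal{D}^\prime$. (Your value $({\rm dim}\,\lambda\cdot{\rm dim}\,\mu)^2/(p!\,q!)$ is in fact the correct one, since $\tau^\prime\left(\mathcal{P}_{p+q}(s)\right)=\delta_{s,e}$ and $\chi^\lambda(e)={\rm dim}\,\lambda$; the paper's unsquared formula is a slip, and the equivalence criterion is unaffected.) The genuine gap is exactly at the place you yourself call the heart of the proof: you never prove that $W$ orthogonal to all vectors $u^{\otimes p}\otimes \bar{u}^{\otimes q}$ must vanish. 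The sentence ``the intertwiner associated with $W$ is Hilbert--Schmidt, which is precisely what excludes the finite-dimensional contraction solutions'' is a heuristic explanation of why the statement should be true in type ${\rm II}_1$, not an argument: you give no construction associating a Hilbert--Schmidt operator to $W$, and no proof that its vanishing forces $W=0$. Note that differentiating $t\mapsto e^{itA}$ only ever yields the \emph{difference} operators ${}^{pq}T(A)=\sum_{k\leq p}{}^k\mathfrak{l}(A)-\sum_{k>p}{}^k\mathfrak{r}(A)$ (the paper's Lemma \ref{pqT}), and since the asserted equality of algebras is \emph{false} for finite-dimensional $\mathcal{M}$ (the walled Brauer contractions survive there), no purely algebraic manipulation of these difference operators --- in particular no Newton-identity or inclusion--exclusion trick of the kind that settles your one-sided case $q=0$ --- can possibly close this gap; an analytic input specific to ${\rm II}_1$-factors is unavoidable, and your proposal does not contain one.

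That input is precisely the paper's main technical content, in Lemmas \ref{approximation} and \ref{+-}: every self-adjoint $a\in\mathcal{M}$ is norm-approximated by an element of a hyperfinite subfactor $\mathcal{R}_0$; then one averages ${}^{pq}T(au^*)\cdot{}^{pq}T(u)$ over the unitary groups $U_0(2^n)$ of an increasing chain of matrix subfactors of $\mathcal{R}_0$, and Peter--Weyl shows that all cross terms (exactly the would-be contraction terms) carry coefficients of order $2^{-n}$ and vanish in norm as $n\to\infty$, leaving ${}^pT^+(a)-{}^qT^-\left(\mathcal{E}(a)\right)$, where $\mathcal{E}$ is the conditional expectation onto $\mathcal{R}_0^\prime\cap\mathcal{M}$. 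Since $\mathcal{E}(a)={\rm tr}(a)\,{\rm I}$ for $a\in\mathcal{R}_0$, this separates $\sum_{k\leq p}{}^k\mathfrak{l}(a)$ from $\sum_{k>p}{}^k\mathfrak{r}(a)$ inside $\mathcal{D}$, after which a cyclic-vector induction much like the one you envisage finishes (1). This averaging over hyperfinite subfactors is the rigorous form of your ``no Hilbert--Schmidt elements'' intuition, and without it (or a substitute) your proof of (1), and hence of the whole theorem, is incomplete. Two smaller points: the outerness of the permutation automorphisms of $\mathcal{M}^{\otimes p}$, which you assert in order to get factoriality of $\mathcal{A}$, $\mathcal{A}^\prime$ and the trace formula ${\rm tr}\left(\mathcal{P}_p(s)\right)=\delta_{s,e}$, is not obvious for a general ${\rm II}_1$-factor and is proved in the paper (Lemma \ref{outer_aut}, via projections of trace $1/N$); and your orthogonality criterion should be stated for $W$ in the $L^2$-completion of $\mathcal{A}\otimes\mathcal{B}$, not merely for bounded $W$, for the density argument to go through.
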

\section{The proof of property (1)} In this section we give three  auxiliary lemmas  and the proof of property (1) in theorem \ref{main_theorem}.
\begin{Lm}\label{approximation}
Let $A$ be a self-adjoint operator from ${\rm II}_1$-factor $\mathcal{M}$. Then for any number $\epsilon>0$, there exist a hyperfinite ${\rm II}_1$-subfactor $\mathcal{R}_0\subset\mathcal{M}$ and self-adjoint ope\-rator $A_\epsilon\in\mathcal{R}_0$ such that $\left\|A-A_\epsilon \right\|<\epsilon$.
Here $\|\;\;\|$ is the ordinary operator norm.
\end{Lm}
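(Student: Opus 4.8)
The plan is to first replace $A$ by a self-adjoint operator with finite spectrum, using the spectral theorem, and then to embed the finite-dimensional algebra it generates into a hyperfinite subfactor. The operator-norm estimate is what forces us to use the \emph{exact} spectral projections of $A$, which is possible precisely because $A$ is self-adjoint; an arbitrary fixed hyperfinite subfactor is far from norm-dense, so $\mathcal{R}_0$ must be chosen depending on $A$.

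First I would fix a Borel partition of an interval containing $\sigma(A)\subset[-\|A\|,\|A\|]$ into finitely many pieces $I_1,\dots,I_n$ each of length $<\epsilon$, choose $t_k\in I_k$, and set $E_k=\chi_{I_k}(A)$. Since $\mathcal{M}$ is a von Neumann algebra it is closed under the bounded Borel functional calculus of its self-adjoint elements, so the $E_k$ are mutually orthogonal projections in $\mathcal{M}$ with $\sum_k E_k=1$; as spectral projections of one operator they commute, hence generate a finite-dimensional \emph{abelian} subalgebra $N_0=\{E_1,\dots,E_n\}''\subset\mathcal{M}$. Putting $A_\epsilon=\sum_k t_k E_k\in N_0$, the spectral theorem gives $\|A-A_\epsilon\|\le\max_k\sup_{x\in I_k}|x-t_k|<\epsilon$. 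Thus everything reduces to finding a hyperfinite ${\rm II}_1$-subfactor $\mathcal{R}_0\supseteq N_0$.

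For this I would invoke the classical fact (Murray--von Neumann) that any finite-dimensional subalgebra of a ${\rm II}_1$-factor is contained in a hyperfinite ${\rm II}_1$-subfactor, and sketch its proof by building an increasing chain of finite-dimensional $*$-subalgebras $N_0=B_0\subset B_1\subset B_2\subset\cdots$ inside $\mathcal{M}$ and setting $\mathcal{R}_0=\overline{\bigcup_m B_m}^{\,w}$. As the weak closure of an increasing union of finite-dimensional algebras, $\mathcal{R}_0$ is automatically approximately finite-dimensional, and it inherits the faithful normal finite trace ${\rm tr}$ from $\mathcal{M}$; so the entire task is to arrange, during the construction, that $\mathcal{R}_0$ is an infinite-dimensional factor, i.e. $Z(\mathcal{R}_0)=\mathbb{C}1$ and $\dim\mathcal{R}_0=\infty$. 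Together with the finite faithful trace this makes $\mathcal{R}_0$ a hyperfinite ${\rm II}_1$-subfactor containing $A_\epsilon$.

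The hard part will be the inductive step. Each $B_m$ is a direct sum of matrix blocks in which all minimal projections of a given block share a common trace, but these traces may differ, and be irrationally related, across blocks. Passing from $B_m$ to $B_{m+1}$ I would do two things using only the factoriality of $\mathcal{M}$: \emph{halve} minimal projections and adjoin connecting partial isometries to keep the algebra growing (so the limit is infinite-dimensional rather than some $M_N$), and \emph{merge} distinct central summands by adjoining a partial isometry $w\in\mathcal{M}$ whose initial and final projections are dominated by minimal projections lying under two different central projections and have a common small trace---such $w$ exists exactly because in a factor any two projections of equal trace are Murray--von Neumann equivalent---thereby strictly shrinking the center. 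The genuine obstacle is the bookkeeping of the merging: because the block-traces are in general incommensurate, no finite stage can already be a single factor, so one must connect the blocks through an exhausting sequence of ever-smaller remainder projections and then verify that any element commuting with every $B_m$ is forced to be scalar, i.e. $Z(\mathcal{R}_0)=\mathbb{C}1$. Once this is checked the lemma follows.
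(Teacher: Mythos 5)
Your first step---discretizing the spectrum to get $A_\epsilon=\sum_k t_kE_k$ with $\|A-A_\epsilon\|<\epsilon$ and thereby reducing the lemma to embedding the finite-dimensional abelian algebra $N_0=\{E_1,\dots,E_n\}''$ into a hyperfinite ${\rm II}_1$-subfactor---is exactly the paper's first step and is fine. The problem is how you produce $\mathcal{R}_0\supseteq N_0$. You propose to prove the embedding by building an increasing chain $N_0=B_0\subset B_1\subset\cdots$ of finite-dimensional subalgebras and taking $\mathcal{R}_0=\bigl(\bigcup_m B_m\bigr)''$, but the decisive point---arranging the halving/merging so that the limit has trivial center, and then actually verifying $Z(\mathcal{R}_0)=\mathbb{C}1$---is precisely what you leave undone; you name it ``the genuine obstacle'' and stop there. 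This is not a minor omission. When the traces ${\rm tr}(E_k)$ are irrational, a single merging step does \emph{not} ``strictly shrink the center'' as you assert: splitting off equal-trace subprojections and connecting them by a partial isometry fuses two blocks but creates new remainder blocks, so $\dim Z(B_{m+1})$ is in general larger than $\dim Z(B_m)$ (you half-acknowledge this one sentence later, contradicting the same claim). Triviality of the center can only appear in the limit, via an exhaustion in which the remainder traces tend to $0$, together with a genuine argument that any central element of the limit algebra is scalar. As written, your proposal reduces the lemma to an unproved statement that is at least as hard as the lemma itself.

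The irony is that you already invoke the one fact that gives a complete, short proof---in a ${\rm II}_1$-factor, projections of equal trace are Murray--von Neumann equivalent---and this is how the paper finishes. Fix any hyperfinite subfactor $R=\bigl(\bigcup_i M_i\bigr)''\subset\mathcal{M}$ generated by a sequence of pairwise commuting ${\rm I}_2$-subfactors. Since $R$ is itself a ${\rm II}_1$-factor, it contains pairwise orthogonal projections $F_1,\dots,F_n$ with $\sum_k F_k=1$ and ${\rm tr}(F_k)={\rm tr}(E_k)$ for every $k$. Choose partial isometries $v_k\in\mathcal{M}$ with $v_k^*v_k=F_k$ and $v_kv_k^*=E_k$; because both families are orthogonal and sum to $1$, the element $u=\sum_k v_k$ is a unitary in $\mathcal{M}$ satisfying $uF_ku^*=E_k$ for all $k$. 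Then $\mathcal{R}_0=uRu^*$ is a hyperfinite ${\rm II}_1$-subfactor containing every $E_k$, hence containing $A_\epsilon$. This one-unitary conjugation replaces your entire chain construction, and the dependence of $\mathcal{R}_0$ on $A$ that you correctly anticipated enters only through $u$.
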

\begin{proof}
Let $A=\int\limits_a^b t\,{\rm d}E_t$ be the spectral decomposition of $A$. Fix the increasing finite sequence of the real numbers $a=a_1 <a_2 < \ldots<a_m>b$ such that $\left| a_i-a_{i+1} \right|<\epsilon$.
Hence, choosing $t_i\in\left[a_i\,a_{i+1} \right)$, we have
\begin{eqnarray}\label{epsilon_approxim}
\left\| A-\sum\limits_{i=1}^m t_i\,E_{[a_i\, a_{i+1})}\right\|<\epsilon.
\end{eqnarray}
It is obvious that $\mathcal{M}$ contains the sequence of the pairwise commuting ${\rm I}_2$-sub\-factors $M_i$, where $i\in \mathbb{N}$. Notice that  exist the pairwise orthogonal projections $F_i$ from the hyperfinite  ${\rm II}_1$-factor $\left(\bigcup\limits_i M_i\right)^{\prime\prime}$ and unitary $u\in\mathcal{M}$ such that
\begin{eqnarray*}
E_{[a_i\, a_{i+1})} = uF_iu^* \text{ for } i=1,2,\ldots,m-1.
\end{eqnarray*}
It follows from (\ref{epsilon_approxim}) that $\mathcal{R}_0=u\left(\bigcup\limits_i M_i\right)^{\prime\prime}u^*$ and $A_\epsilon= \sum\limits_{i=1}^m t_i\,E_{[a_i\, a_{i+1})}\in \mathcal{R}_0$ satisfy the conditions as in the lemma.
\end{proof}
Consider the operators $\mathfrak{l}(a)$ and $\mathfrak{r}(a)$, where $a\in \mathcal{M}$, acting in Hilbert space $L^2\left(\mathcal{M,{\rm tr}} \right)$ by
\begin{eqnarray*}
\mathfrak{l}(a\eta)=a\eta, \mathfrak{r}(a)=\eta a,\;\;\eta\in L^2\left(\mathcal{M,{\rm tr}} \right).
\end{eqnarray*}
Let us denote by $^k\!a$ the operator $\underbrace{{\rm
I}\otimes\cdots\otimes{\rm I}\otimes\stackrel{k}{A}\otimes{\rm
I}\otimes\cdots)}_{p+q}\in \mathcal{M}^{\otimes p}\otimes\mathcal{M}^{\prime\otimes q}$, where $A=\left\{\begin{array}{rl} \mathfrak{l}(a)& \text{ if } k\leq p\\
\mathfrak{r}(a)& \text{ if } p<k\leq p+q\end{array}\right.$, $a\in\mathcal{M}$.
\begin{Lm}\label{pqT}
Operator $\;\;\; ^{pq}\!T(a)=\sum\limits_{k=1}^p {^k\!\mathfrak{l}(a)}-\sum\limits_{k=p+1}^{p+q}{^k\!\mathfrak{r}(a)}\;\;\;$ lies in algebra
$\left\{\mathfrak{L}^{\otimes p}\otimes\mathfrak{R}^{\otimes q}\left( U(\mathcal{M})\right)\right\}^{\prime\prime}$ for all $a\in\mathcal{M}$.
\end{Lm}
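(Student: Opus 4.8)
The plan is to realize ${}^{pq}\!T(a)$, for a self\nobreakdash-adjoint $a$, as the bounded self\nobreakdash-adjoint generator of the one\nobreakdash-parameter unitary group obtained by exponentiating $a$ inside $U(\mathcal{M})$, and then to invoke the norm\nobreakdash-closedness of the von Neumann algebra $\mathcal{A}=\left\{\mathfrak{L}^{\otimes p}\otimes\mathfrak{R}^{\otimes q}\left(U(\mathcal{M})\right)\right\}^{\prime\prime}$. First I would reduce to the self\nobreakdash-adjoint case: since $\mathfrak{l}$ and $\mathfrak{r}$ are $\mathbb{C}$\nobreakdash-linear in their argument, the map $a\mapsto{}^{pq}\!T(a)$ is complex\nobreakdash-linear, and every $a\in\mathcal{M}$ is a combination $a_1+ia_2$ of self\nobreakdash-adjoint elements; hence it suffices to treat $a=a^*$. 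For such $a$ put $u_t=e^{ita}\in U(\mathcal{M})$ and $V(u_t)=\mathfrak{L}^{\otimes p}(u_t)\otimes\mathfrak{R}^{\otimes q}(u_t)$, which is one of the generating operators of $\mathcal{A}$, hence $V(u_t)\in\mathcal{A}$ for every $t$.

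The key computation is to show that
\begin{eqnarray*}
V(u_t)=\exp\!\left(it\,{}^{pq}\!T(a)\right),\qquad t\in\mathbb{R}.
\end{eqnarray*}
On the first $p$ tensor slots, $\mathfrak{L}^{\otimes p}(u_t)$ acts as $\mathfrak{l}(u_t)^{\otimes p}$; because $\mathfrak{l}$ is a $*$\nobreakdash-homomorphism, $\mathfrak{l}(u_t)=\mathfrak{l}(e^{ita})=e^{it\,\mathfrak{l}(a)}$, so this part contributes $\exp\!\left(it\sum_{k=1}^{p}{}^k\!\mathfrak{l}(a)\right)$. On the last $q$ slots, $\mathfrak{R}^{\otimes q}(u_t)$ acts by right multiplication by $u_t^*$, i.e.\ as $\mathfrak{r}(u_t^*)^{\otimes q}$; although $\mathfrak{r}$ is an anti\nobreakdash-homomorphism, on a single (commutative) element one still has $\mathfrak{r}(a)^n=\mathfrak{r}(a^n)$, whence $\mathfrak{r}(u_t^*)=\mathfrak{r}(e^{-ita})=e^{-it\,\mathfrak{r}(a)}$ and this part contributes $\exp\!\left(-it\sum_{k=p+1}^{p+q}{}^k\!\mathfrak{r}(a)\right)$. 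Since operators supported in distinct tensor slots commute, the two contributions multiply to $\exp\!\left(it\,{}^{pq}\!T(a)\right)$; this is the displayed identity, and it also accounts for the minus sign in the definition of ${}^{pq}\!T(a)$.

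Finally I would differentiate at $t=0$. As $a$ is bounded, ${}^{pq}\!T(a)$ is a bounded self\nobreakdash-adjoint operator, so $t\mapsto V(u_t)$ is norm\nobreakdash-analytic and
\begin{eqnarray*}
{}^{pq}\!T(a)=\frac{1}{i}\,\lim_{t\to0}\frac{V(u_t)-{\rm I}}{t},
\end{eqnarray*}
the limit existing in operator norm. Each difference quotient lies in $\mathcal{A}$ (because $V(u_t),{\rm I}\in\mathcal{A}$ and $\mathcal{A}$ is a linear subspace), and a von Neumann algebra is norm\nobreakdash-closed, so the limit ${}^{pq}\!T(a)$ belongs to $\mathcal{A}$. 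Together with the linearity reduction, this proves the lemma. I expect no genuine obstacle: the boundedness of $a$ makes the passage to the generator a routine norm limit rather than an appeal to Stone's theorem with domain subtleties, and the only point demanding care is the bookkeeping in the exponential identity, namely the anti\nobreakdash-homomorphism property of $\mathfrak{r}$ and the resulting sign.
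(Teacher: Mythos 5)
Your proposal is correct and follows essentially the same route as the paper: the paper also takes $u_t=e^{itA}$ for self-adjoint $A$, identifies $^{pq}\!T(A)$ as the derivative of $\mathfrak{L}^{\otimes p}\otimes\mathfrak{R}^{\otimes q}(u_t)$ at $t=0$ (hence an element of the von Neumann algebra), and then handles general $a=A+iB$ by linearity. The only difference is that you spell out what the paper dismisses as ``clear'' --- the exponential identity $V(u_t)=\exp\left(it\,^{pq}\!T(a)\right)$, the sign coming from the anti-homomorphism property of $\mathfrak{r}$, and the norm-closedness argument --- which is a faithful filling-in of the same argument, not a different one.
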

\begin{proof}
Fix any self-adjoint operator $A\in \mathcal{M}$ and consider the one-parameter unitary group $u_t=e^{itA}\in U(\mathcal{M})$, $t\in\mathbb{ R}$. It is clear that $$\frac{{\rm d}}{{\rm d}\,t}\mathfrak{L}^{\otimes p}\otimes\mathfrak{R}^{\otimes q}\left(u_t \right)\Big|_{t=0}= \; ^{pq}\!T(A)\in
\left\{\mathfrak{L}^{\otimes p}\otimes\mathfrak{R}^{\otimes q}\left( U(\mathcal{M})\right)\right\}^{\prime\prime}.$$
It follows from this that $^{pq}\!T(A)+i\,\,^{pq}T(B)=\;^{pq}T(A+iB)\in \left\{\mathfrak{L}^{\otimes p}\otimes\mathfrak{R}^{\otimes q}\left( U(\mathcal{M})\right)\right\}^{\prime\prime}$ for any self-adjoint operator $B\in\mathcal{M}$.
\end{proof}
\begin{Lm}\label{+-}
Algebra $\left\{\mathfrak{L}^{\otimes p}\otimes\mathfrak{R}^{\otimes q}\left( U(\mathcal{M})\right)\right\}^{\prime\prime}$ contains the operators $ \; ^{p}T^+(a)$ $=\sum\limits_{k=1}^p {^k\!\mathfrak{l}(a)}$ and $\; ^{q}T^-(a)=\sum\limits_{k=p+1}^{p+q} {^k\mathfrak{r}(a)}$ for all $a\in\mathcal{M}$.
\end{Lm}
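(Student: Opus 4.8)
The plan is to derive the statement from Lemma~\ref{pqT}. Since $^{pq}\!T(a)={}^{p}T^+(a)-{}^{q}T^-(a)$ already lies in $\mathcal{A}:=\left\{\mathfrak{L}^{\otimes p}\otimes\mathfrak{R}^{\otimes q}(U(\mathcal{M}))\right\}''$, it suffices to prove $^{p}T^+(a)\in\mathcal{A}$; then $^{q}T^-(a)={}^{p}T^+(a)-{}^{pq}\!T(a)\in\mathcal{A}$. By linearity I may take $a=a^*$. The first point to record is an obstruction: conjugation by $V(u):=\mathfrak{L}^{\otimes p}\otimes\mathfrak{R}^{\otimes q}(u)$ sends both $^{p}T^+(a)$ and $^{q}T^-(a)$ to the same rule $a\mapsto uau^*$, so no averaging or Dixmier-type argument over $U(\mathcal{M})$ (which only produces the symmetric scalar ${\rm tr}(a)(p-q)I$) can separate the two sums; the separation must exploit the ${\rm II}_1$-structure through finite approximation. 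Accordingly, using Lemma~\ref{approximation} and the norm-closedness of $\mathcal{A}$, I reduce to the case $a\in\mathcal{R}_k$, where $\mathcal{R}_k\cong M_n$ is a finite matrix subfactor of a hyperfinite $\mathcal{R}_0\subset\mathcal{M}$.

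Next I restrict to the compact subgroup $U(\mathcal{R}_k)\cong U(n)\subset U(\mathcal{M})$ and set $\mathcal{B}_k:=\left\{V(u):u\in U(\mathcal{R}_k)\right\}''$, which lies in $\mathcal{A}$ because its generators are among the generators of $\mathcal{A}$. On $U(n)$ the representation is the mixed tensor $\mathfrak{l}(\cdot)^{\otimes p}\otimes\mathfrak{r}(\cdot^*)^{\otimes q}$, so by the classical (mixed) Schur--Weyl duality $\mathcal{B}_k$ is the commutant of the walled Brauer algebra: the algebra generated by the permutations $\mathfrak{S}_p\times\mathfrak{S}_q$ of the like legs together with the contraction operators built from the normalized maximally entangled vector $\Omega_n\in L^2(\mathcal{M})\otimes L^2(\mathcal{M})$. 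Now $^{p}T^+(a)$ is symmetric in the first $p$ legs and acts as the identity on the last $q$, hence commutes with $\mathfrak{S}_p\times\mathfrak{S}_q$; its only obstruction to lying in $\mathcal{B}_k$ comes from the contractions, and each such defect is an operator carrying a factor of $\Omega_n$. In the model case $p=q=1$ this is transparent: there $\mathcal{B}_k=\mathbb{C}P_n\oplus B(\Omega_n^{\perp})$ (up to the multiplicity of $L^2(\mathcal{M})$ as an $\mathcal{R}_k$-bimodule), with $P_n$ the rank-one projection onto $\Omega_n$, and since $^{p}T^+(a)\Omega_n={}^{q}T^-(a)\Omega_n$ for $a=a^*$, one splits $^{p}T^+(a)=B_k+O_k$, where the block-diagonal part $B_k\in\mathcal{B}_k\subseteq\mathcal{A}$ and the remainder $O_k=|\Omega_n\rangle\langle w_n|+|w_n\rangle\langle\Omega_n|$ is a rank-$\le 2$ contraction term.

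The decisive ${\rm II}_1$-input enters in the limit. As $n=\dim\mathcal{R}_k\to\infty$ the vectors $\Omega_n$ tend weakly to $0$ in $L^2(\mathcal{M})^{\otimes(p+q)}$, because the identity/trace vector is not square-integrable in a ${\rm II}_1$-factor --- precisely the feature absent for matrix algebras, where the lemma indeed fails. Choosing $a_k\in\mathcal{R}_k$ with $\|a-a_k\|\to 0$ gives $^{p}T^+(a_k)\to{}^{p}T^+(a)$ in norm, while every contraction-carrying remainder tends weakly to $0$ because its matrix coefficients contain an inner product with $\Omega_n$. Hence the corrected operators $B_k={}^{p}T^+(a_k)-(\text{remainder})\in\mathcal{B}_k\subseteq\mathcal{A}$ converge weakly to $^{p}T^+(a)$, and the weak closedness of $\mathcal{A}$ yields $^{p}T^+(a)\in\mathcal{A}$.

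I expect the main obstacle to be the general-$(p,q)$ bookkeeping of the second paragraph: one must identify, through the walled Brauer picture, exactly which part of $^{p}T^+(a_k)$ already lies in $\mathcal{B}_k$, and then show that the entire remainder is a finite sum of operators each containing at least one $\Omega_n$-contraction, so that the uniform weak vanishing $\Omega_n\to 0$ forces the remainder weakly to $0$. The case $p=q=1$ carries all the conceptual content; the difficulty is the combinatorial control of multiple simultaneous contractions and of the multiplicity spaces arising from $L^2(\mathcal{M})$ viewed as an $\mathcal{R}_k$-bimodule.
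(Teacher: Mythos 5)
Your route is genuinely different from the paper's. The paper never restricts to a compact subgroup and never invokes finite-dimensional Schur--Weyl theory as such: it multiplies two elements of $\mathcal{A}=\left\{\mathfrak{L}^{\otimes p}\otimes\mathfrak{R}^{\otimes q}\left(U(\mathcal{M})\right)\right\}^{\prime\prime}$ supplied by Lemma \ref{pqT}, namely ${}^{pq}T(au^*)\cdot{}^{pq}T(u)$, averages these over the unitary groups $U_0(2^n)$ of an increasing chain of matrix subfactors of the hyperfinite $\mathcal{R}_0$ from Lemma \ref{approximation}, and identifies the strong limit as ${}^pT^+(a)-{}^qT^-(\mathcal{E}(a))$, where $\mathcal{E}$ is the conditional expectation onto $\mathcal{R}_0^\prime\cap\mathcal{M}$; the left legs keep $a$ because $au^*\cdot u=a$, the right legs average to $\mathcal{E}(a)$, which is the scalar ${\rm tr}(a_\epsilon)$ for $a_\epsilon\in\mathcal{R}_0$, and norm approximation finishes. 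Your proposal instead decomposes ${}^pT^+(a_k)$ against the algebra $\mathcal{B}_k=\left\{\mathfrak{L}^{\otimes p}\otimes\mathfrak{R}^{\otimes q}\left(U(\mathcal{R}_k)\right)\right\}^{\prime\prime}$ identified by walled-Brauer duality, and disposes of the defect by a weak limit. Several of your points are correct and even tidier than the paper: deriving ${}^qT^-$ from ${}^pT^+$ and Lemma \ref{pqT} avoids the paper's ``repeat the argument''; the observation that global averaging cannot separate the two sums is right; and the $p=q=1$ block decomposition (using ${}^1T^+(a)\Omega_n={}^1T^-(a)\Omega_n$) is sound.

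There are, however, genuine gaps, located exactly where the lemma is hard. (i) The decisive claim --- that every contraction-carrying remainder tends weakly to $0$ --- is asserted, not proved, and the justification offered is faulty as stated: in a ${\rm II}_1$-factor the trace vector \emph{is} square-integrable (that is what $L^2(\mathcal{M},{\rm tr})$ means), and ``$\Omega_n\to 0$ weakly in $L^2(\mathcal{M})^{\otimes(p+q)}$'' does not parse, since $\Omega_n$ is a unit vector of a varying tensor factor (your active space), not of the fixed Hilbert space. What you need is that the orthogonal projections onto the $\mathfrak{L}^{\otimes p}\otimes\mathfrak{R}^{\otimes q}\left(U(\mathcal{R}_k)\right)$-invariant vectors tend weakly to $0$. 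This is true, but requires a real argument: identify invariant vectors with Hilbert--Schmidt operators commuting with $\mathfrak{L}(\mathcal{R}_k)$, and then either take the $\mathcal{R}_k$ nested, so these projections decrease with intersection $0$ (a common invariant vector would be a nonzero Hilbert--Schmidt operator commuting with the ${\rm II}_1$-factor $\left(\bigcup_k\mathcal{R}_k\right)^{\prime\prime}$, impossible since the commutant of a type ${\rm II}$ factor contains no nonzero compact operator), or prove a Schur-orthogonality estimate giving norm $O(1/\dim)$ for the invariant component of a bounded vector. Your construction fixes $\mathcal{R}_k$ only through $\|a-a_k\|\to 0$, hence secures neither; and the naive principle ``trivial common range implies weak nullity'' is false for non-nested projections. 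No norm estimate can substitute: these contraction operators are projections of norm $1$ for every $n$. (Incidentally, this is also the delicate spot of the paper's own proof: the claimed identity $\left({}^{kj}P\right)^2=2^{-n}\,{}^{kj}P$ is incorrect --- ${}^{kj}P$ is precisely the invariant projection, which fixes the unit vector $\sum_{s,t}\mathfrak{e}_{st}\otimes\mathfrak{e}_{ts}$ --- so the mixed terms of $\Sigma(a,u)$ vanish only strongly, by the type of argument you are missing, not in norm.) (ii) The lemma is stated for all $p,q$, but you carry out the decomposition ${}^pT^+(a_k)=B_k+O_k$ only for $p=q=1$ and explicitly defer the general bookkeeping; in your scheme that bookkeeping \emph{is} the content of the lemma, so the proof is incomplete as it stands. (iii) Minor: Lemma \ref{approximation} produces a hyperfinite subfactor, not a matrix subfactor; to place $a_k$ in $\mathcal{R}_k\cong M_n$ you must additionally perturb the spectral projections of $a_\epsilon$ to projections of dyadic trace.
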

\begin{proof}
At first we will prove that  $\; ^{p}T^+(a)$ lies in $\left\{\mathfrak{L}^{\otimes p}\otimes\mathfrak{R}^{\otimes q}\left( U(\mathcal{M})\right)\right\}^{\prime\prime}$.

Take self-adjoint $a\in\mathcal{M}$ and fix number $\epsilon>0$.
Using lemma \ref{approximation},  we find  the hyperfinite ${\rm II}_1$-factor $\mathcal{R}_0$ and $a_\epsilon\in\mathcal{R}_0$ such that
\begin{eqnarray}\label{estimate_epsilon}
\left\| a-a_\epsilon \right\|<\epsilon.
\end{eqnarray}
Let $M_i$, $i\in\mathbb{N}$ be the sequence of pairwise commuting ${\rm I}_2$-subfactors from $\mathcal{R}_0$ such that $\left\{ \bigcup\limits_{j\in\mathbb{N}} M_j\right\}^{\prime\prime}=\mathcal{R}_0$, and $\mathcal{N}_0$ be the relative commutant of $\mathcal{R}_0$ in $\mathcal{M}$: $\mathcal{N}_0$ $=$ $\mathcal{R}_0^\prime\cap\mathcal{M}$.
There exists the unique normal conditional expectation $\mathcal{E}$ of $\mathcal{M}$ onto $\mathcal{N}_0$ satisfying the next conditions
\begin{itemize}
  \item {\bf a)} ${\rm tr}\left(a \right)={\rm tr}\left(\mathcal{E}(a) \right)$ for all $a\in\mathcal{M}$;
  \item {\bf b)} $\mathcal{E}\left(xay \right)=x\mathcal{E}\left(a\right)y$  for all $a\in\mathcal{M}$ and $x,y\in\mathcal{N}_0$;
  \item {\bf c)} $\mathcal{E}(a)={\rm tr}(a)$  for all $a\in\mathcal{R}_0$.
\end{itemize}
Denote by $U_k\left(2^{l-k} \right)$ the unitary subgroup of the ${\rm I}_{2^{l-k}}$-factor $\left\{ \bigcup\limits_{j=k+1}^lM_j \right\}^{\prime\prime}$. Let ${\rm d}\,u$ be Haar measure on $U_k\left(2^{l-k} \right)$.

Since, by lemma \ref{pqT},
\begin{eqnarray*}
\;\;\; ^{pq}\!T(au^*)\,\cdot\, ^{pq}\!T(u)\;\; \text{ lies in }\;
\left\{\mathfrak{L}^{\otimes p}\otimes\mathfrak{R}^{\otimes q}\left( U(\mathcal{M})\right)\right\}^{\prime\prime} \;\text{ for all }\; a,u\in\mathcal{M},
\end{eqnarray*}
 to prove the theorem, it suffices to show that
\begin{eqnarray}\label{limit}
\lim\limits_{n\to\infty} \;\;\int\limits_{U_0\left( 2^{n}\right)}\;^{pq}\!T(au^*)\,\cdot\, ^{pq}\!T(u)\,{\rm d}\,u=
\; ^{p}T^+(a)-\; ^{q}T^-\left(\mathcal{E}(a) \right), a\in \mathcal{M}
\end{eqnarray}
with respect to the strong operator topology.

Indeed, then, by property {\bf c)},  the operator $ \; ^{p}T^+(a_\epsilon)-q{\rm tr}(a_\epsilon) {\rm I} $, where ${\rm I}$ is the identity operator from $\mathcal{M}^{\otimes p}\otimes\mathcal{M}^{\prime\otimes q}$, lies in $\left\{\mathfrak{L}^{\otimes p}\otimes\mathfrak{R}^{\otimes q}\left( U(\mathcal{M})\right)\right\}^{\prime\prime}$. Hence, using (\ref{estimate_epsilon}), we obtain
\begin{eqnarray*}
\; ^{p}T^+(a)\in\left\{\mathfrak{L}^{\otimes p}\otimes\mathfrak{R}^{\otimes q}\left( U(\mathcal{M})\right)\right\}^{\prime\prime}.
\end{eqnarray*}
To calculate of the left side in (\ref{limit}) we notice that
\begin{eqnarray}\label{product_TT}
\;^{pq}\!T(au^*)\,\cdot\, ^{pq}\!T(u)=\sum\limits_{k=1}^p\; ^k\mathfrak{l}(a)+\sum\limits_{k=p+1}^{p+q}
\; ^k\mathfrak{r}\left(uau^* \right)+ \Sigma(a,u), \;\text{ where }
\end{eqnarray}
\begin{eqnarray}\label{sigma}
\begin{split}
&\Sigma(a,u)=\sum\limits_{\{k,j=1\}\&\{k\neq j\}}^p \, ^k\!\mathfrak{l}(a)\cdot\, ^k\!\mathfrak{l}(u^*)\cdot\, ^j\mathfrak{l}(u)+\sum\limits_{\{k,j=p+1\}\&\{k\neq j\}}^{p+q} \, ^k\!\mathfrak{r}(u^*)\cdot\, ^j\mathfrak{r}(u)\cdot\, ^k\mathfrak{r}(a)\;\;\;\;\;\;\\
&-\sum\limits_{k=1}^p\sum\limits_{j=p+1}^{p+q}\; ^k\mathfrak{l}(a)\cdot\; ^k\!\mathfrak{l}\left( u^* \right)\cdot\; ^j\mathfrak{r}(u)-\sum\limits_{k=1}^p\sum\limits_{j=p+1}^{p+q}\;
  ^k\mathfrak{l}(u)\cdot\; ^j\mathfrak{r}\left( u^* \right)\cdot\;^j\mathfrak{r}(a).
 \end{split}
\end{eqnarray}
Let us first prove that
\begin{eqnarray}\label{limit_exp}
\lim\limits_{n\to \infty}\int\limits_{U_0\left(2^n \right)} \; ^k\mathfrak{r}\left(uau^* \right)\,{\rm}d
\,u\;= \; ^k\mathfrak{r}\left(\mathcal{E}(a) \right) \;\text{ for all } a\in \mathcal{M}
\end{eqnarray}
with respect to the strong operator topology.

For this purpose we notice that the map
\begin{eqnarray*}
a\ni L^2\left(\mathcal{M,{\rm tr}} \right)\stackrel{\mathcal{E}_n}{\mapsto}\int\limits_{U_0\left(2^n \right)}uau^*\,{\rm}d
\,u\;\in L^2\left(\mathcal{M,{\rm tr}} \right)
\end{eqnarray*}
 is the orthogonal projection. Since $\mathcal{E}_n\geq \mathcal{E}_{n+1}$, then
 \begin{eqnarray*}
 \lim\limits_{n\to\infty}\mathcal{E}_n(a)=\mathcal{E}(a) \;\text{ for all } a \in \mathcal{M}
 \end{eqnarray*}
with respect to the norm on   $L^2\left(\mathcal{M,{\rm tr}} \right)$. Hence, applying the inequality
$\left\|\mathcal{E}_n(a)\right\|$ $\leq\|a\|$, we obtain $\lim\limits_{n\to\infty}\left\|\mathcal{E}_n(a)\eta-\mathcal{E}(a)\eta\right\|_{L^2}=0$ for all $\eta\in
 L^2\left(\mathcal{M,{\rm tr}} \right)$. This gives (\ref{limit_exp}).

 To estimate of $\Sigma(a,u)$ fix the matrix unit $\left\{ \mathfrak{e}_{pq}: 1\leq p,q\leq 2^n \right\}$ of
 the ${\rm I}_{2^n}$-factor $\left\{ \bigcup\limits_{j=1}^n M_j \right\}^{\prime\prime}$. We recall that
the operators $e_{pq}$ satisfy the relations
\begin{eqnarray*}
\mathfrak{e}_{pq}^*=\mathfrak{e}_{qp}, \mathfrak{e}_{pq}\mathfrak{e}_{st}=\delta_{qs}\mathfrak{e}_{pt},  \;\;1\leq p,q,s,t\leq 2^n.
\end{eqnarray*}
Denote by $\left\{ a_{pq}\right\}_{p,q=1}^{2^n} \subset \mathbb{C}$ the corresponding matrix elements of the operator $a\in \left\{ \bigcup\limits_{j=1}^n M_j \right\}^{\prime\prime}$: $a=\sum\limits_{p,q=1}^{2^n}a_{pq}\mathfrak{e}_{pq}$. If $k\neq j$, then, applying Peter-Weyl theorem, we obtain
\begin{eqnarray*}
\begin{split}
^{kj}T_\mathfrak{l}=\int\limits_{U_0\left(2^n\right)}\,^k\mathfrak{l}\left(u^* \right)\cdot
\,^j\mathfrak{l}\left(u\right)\;{\rm d}\,u=2^{-n}\sum\limits_{p,q=1}^{2^n}
\,^k\mathfrak{l}\left(\mathfrak{e}_{pq} \right)\cdot\,^j\mathfrak{l}\left(\mathfrak{e}_{qp}\right),\\
^{kj}T_\mathfrak{r}=\int\limits_{U_0\left(2^n\right)}\,^k\mathfrak{r}\left(u^* \right)\cdot
\,^j\mathfrak{r}\left(u\right)\;{\rm d}\,u=2^{-n}\sum\limits_{p,q=1}^{2^n}
\,^k\mathfrak{r}\left(\mathfrak{e}_{pq} \right)\cdot\,^j\mathfrak{r}\left(\mathfrak{e}_{qp}\right),\\
\,^{kj}P=\int\limits_{U_0\left(2^n\right)}\,^k\mathfrak{l}\left(u^* \right)\cdot
\,^j\mathfrak{r}\left(u\right)\;{\rm d}\,u=2^{-n}\sum\limits_{p,q=1}^{2^n}
\,^k\mathfrak{l}\left(\mathfrak{e}_{pq} \right)\cdot\,^j\mathfrak{r}\left(\mathfrak{e}_{qp}\right).
\end{split}
\end{eqnarray*}
A trivial verification shows that
\begin{eqnarray*}
&\left(^{kj}T_\mathfrak{l} \right)^*=\;^{kj}T_\mathfrak{l}, \left(^{kj}T_\mathfrak{r} \right)^*=\, ^{kj}T_\mathfrak{r},\; ^{kj}T_\mathfrak{l}^2=\,^{kj}T_\mathfrak{r}^2=2^{-2n}{\rm I},\\
&\left(\,^{kj}\!P \right)^*=\,^{kj}\!P^2=2^{-n}\cdot\,^{kj}\!P.
\end{eqnarray*}
Hence, using (\ref{sigma}), we have
\begin{eqnarray*}
\lim\limits_{n\to\infty}\int\limits_{U_0\left(2^n \right)}\Sigma(a,u)\;{\rm d}\,u=0
\end{eqnarray*}
with respect to the operator norm. We thus get (\ref{limit}).

The proof above works  for the operator $\; ^{q}T^-(a)$. But we must examine
$\;^{pq}\!T(u^*a)\,\cdot\, ^{pq}\!T(u)$ instead $\;^{pq}\!T(au^*)\,\cdot\, ^{pq}\!T(u)$ (see (\ref{product_TT})).
\end{proof}
\paragraph*{The proof of Theorem \ref{main_theorem}(1).}
By lemma \ref{+-}, it suffices to show that
\begin{eqnarray}\label{UT}
\left\{ \; ^{p}T^+(a), a\in\mathcal{M} \right\}^{\prime\prime}=\left(\mathcal{M}^{\otimes p} \right)^{\mathfrak{S}_p}.
\end{eqnarray}
Fix the orthonormal bases $\left\{ b_j \right\}_{j=0}^\infty$ in $L^2\left(\mathcal{M},{\rm tr} \right)$ such that $b_j\in\mathcal{M}$ and $b_0={\rm I}$. Let $\mathbf{j}=\left(j_1,j_2,\ldots,j_p \right)$ be the ordered collection of the indexes,  and let $\mathbf{b}_\mathbf{j}$ = $ b_{j_1}\otimes b_{j_2}\otimes\ldots\otimes b_{j_p}$ be the corresponding element in $L^2\left(\mathcal{M}^{\otimes p}, {\rm tr}^{\otimes p} \right)\cap\mathcal{M}^{\otimes p}$. We call two collections $\mathbf{i}=\left(i_1,i_2,\ldots,i_p \right)$ and $\mathbf{j}=\left(j_1,j_2,\ldots,j_p \right)$ are equivalent if there exists $s\in \mathfrak{S}$ such that
$\left(i_1,i_2,\ldots,i_p \right)$ = $\left(j_{s(1)},j_{s(2)},\ldots,j_{s(p)} \right)$. Denote by $\overline{\mathbf{i}}$ the equivalence class containing $\mathbf{i}$. Set $s(\mathbf{j})=\left(j_{s(1)},j_{s(2)},\ldots,j_{s(p)} \right)$, $s\in\mathfrak{S}_p$. It is clear that the elements $\mathbf{b}_{\overline{\mathbf{j}}}=\sum\limits_{s\in\mathfrak{S}_p}\mathbf{b}_{s(\mathbf{j})}
\in \left(\mathcal{M}^{\otimes p} \right)^{\mathfrak{S}_p}$
form the orthogonal bases in $L^2\left(\left(\mathcal{M}^{\otimes p} \right)^{\mathfrak{S}_p},{\rm tr}^{\otimes p} \right)$. So to prove (\ref{UT}) , it suffices to show that
\begin{eqnarray}\label{BU}
\mathbf{b}_{\overline{\mathbf{j}}}\in \left\{ \; ^{p}T^+(a), a\in\mathcal{M} \right\}^{\prime\prime}{\rm I}.
\end{eqnarray}

 Denote by $L_m$  the span of the elements $\mathbf{b}_{\overline{\mathbf{j}}}$ such that
$\left| \left\{ k: j_k>0 \right\} \right|=m$. It is obvious that $L_m\subset\left(\mathcal{M}^{\otimes p} \right)^{\mathfrak{S}_p}$. In particular, $H_0=\mathbb{C}\,{\rm I}$. It follows easily that $L_1$ is the  set $\left\{ \; ^{p}T^+(a)\,{\rm I}, a\in\mathcal{M} \right\}$. A trivial verification shows that $L_m$ are pairwise orthogonal and the closure of $\bigoplus\limits_{m=0}^p L_m$ with respect to the $L^2$-norm coincides with $L^2\left(\left(\mathcal{M}^{\otimes p} \right)^{\mathfrak{S}_p},{\rm tr}^{\otimes p} \right)$.
Thus, if can we prove that
\begin{eqnarray}\label{LmT}
L_m\subset \left\{ \; ^{p}T^+(a), a\in\mathcal{M} \right\}^{\prime\prime}{\rm I} \text{ for all } m=1,2,\ldots,p,
\end{eqnarray}
then we obtain (\ref{BU}).

Let us prove this, by induction on $m$.

If $m=1$ then $L_1\subset \left\{ \; ^{p}T^+(a), a\in\mathcal{M} \right\}^{\prime\prime}{\rm I}$, by the definition of $\; ^{p}T^+(a)$ (see lemma \ref{+-}). Assuming (\ref{LmT}) to hold for $m=1,2,\ldots,k$, we will prove that
\begin{eqnarray}
L_{k+1}\subset \left\{ \; ^{p}T^+(a), a\in\mathcal{M} \right\}^{\prime\prime}{\rm I}.
\end{eqnarray}
Indeed, if $\mathbf{b}_{\overline{\mathbf{j}}}$ lies in $L_k$ then without loss of generality we can assume that
\begin{eqnarray*}
{\mathbf{j}}=\underbrace{\left(\overbrace{j_1,j_2,\ldots,j_k}^k,0,\ldots,0 \right)}_p,\;\text{ where } j_i\neq0 \text{ for all } i\in\left\{ 1,2,\ldots,k \right\}.
\end{eqnarray*}
If $l\neq 0$ then
$
^{p}T^+\left(b_l \right)\mathbf{b}_{\overline{\mathbf{j}}}=b_l^{(k)}+ \mathbf{b}_{\overline{\mathbf{i}}}
$, where $b_l^{(k)}\in \bigoplus\limits_{m=0}^k L_m\subset\left\{ \; ^{p}T^+(a), a\in\mathcal{M} \right\}^{\prime\prime}{\rm I}$ and
$
{\mathbf{i}}=\underbrace{\left(\overbrace{j_1,j_2,\ldots,j_k,l}^{k+1},0,\ldots,0 \right)}_p.
$
 Therefore, $\mathbf{b}_{\overline{\mathbf{i}}}$ lies in $\left\{ \; ^{p}T^+(a), a\in\mathcal{M} \right\}^{\prime\prime}{\rm I}$. This proves (\ref{LmT}), (\ref{BU}) and (\ref{UT}).\qed

\section{The proof of the properties (2) and (3)}
Let ${\rm Aut}\,\mathcal{N}$ be the group of all automorphisms of von Neumann algebra $\mathcal{N}$.
We recall that automorphism $\theta$ of factor $\mathcal{F}$ is inner if there exists unitary $u\in\mathcal{F}$ such that $\theta(a)= uau^*={\rm Ad}\,u(a)$. Let us denote by ${\rm Int}\,\mathcal{F}$ the set of all inner automorphisms of factor $\mathcal{F}$. An automorphism $\theta\in{\rm Aut}\,\mathcal{F}$ is called outer if $\theta\notin{\rm Int}\,\mathcal{F}$.

Consider ${\rm II}_1$-factor $\mathcal{F} =\mathcal{M}^{\otimes p}\otimes \left(\mathcal{M}^\prime \right)^{\otimes q}$. We emphasize that $\mathcal{F}$ is generated by the operators $A=\mathfrak{l}(a_1)\ldots\otimes\mathfrak{l}(a_p)\otimes\mathfrak{r}(a_{p+1}
\otimes\ldots\otimes\mathfrak{r}(a_{p+q}))$ $\left(a_j\in\mathcal{M} \right)$, $\left( 1\leq j\leq p+q\right)$ which act in $L^2\left(\mathcal{M}^{\otimes(p+q)},{\rm tr}^{\otimes(p+q)} \right)$ as follows
\begin{eqnarray*}
A\left(\eta_1\otimes\ldots\eta_p\otimes\eta_{p+1}\otimes\ldots\eta_{p+q}\right)=
a_1\eta_1\otimes\ldots a_p\eta_p\otimes\eta_{p+1}a_{p+1}^*\otimes\ldots\eta_{p+q}a_{p+q}^*.
\end{eqnarray*}
From now on, ${\rm tr}^{\otimes(p+q)}$ denotes the unique normal normalized trace on the factor $\mathcal{F}$:
\begin{eqnarray}\label{tr_on_F}
{\rm tr}^{\otimes(p+q)}(A)=\prod\limits_{k=1}^p{\rm tr}\left(a_k \right)\prod\limits_{k=p+1}^{p+q}{\rm tr}\left(a_k^* \right).
\end{eqnarray}
If $J$ is the antilinear isometry on $L^2\left(\mathcal{M}^{\otimes(p+q)},{\rm tr}^{\otimes(p+q)} \right)$ defined by\newline $L^2\left(\mathcal{M}^{\otimes(p+q)},{\rm tr}^{\otimes(p+q)} \right)\ni X\stackrel{J}{\mapsto}X^*\in L^2\left(\mathcal{M}^{\otimes(p+q)},{\rm tr}^{\otimes(p+q)} \right)$, then
\begin{eqnarray}\label{antiisometry}
JAJ\left(\eta_1\otimes\ldots\eta_p\otimes\eta_{p+1}\otimes\ldots\eta_{p+q}\right)=
\eta_1 a_1^*\otimes\ldots \eta_pa_p^*\otimes a_{p+1}\eta_{p+1}\otimes\ldots a_{p+q}\eta_{p+q}.\;\;\;\;\;
\end{eqnarray}
Well-known that $\mathcal{F}^\prime=J\mathcal{F}J$ (see\cite{TAKES2}).

Let $\mathcal{P}_{p+q}(s)$, $\left(s\in\mathfrak{S}_{p+q} \right)$ be the unitary operator on   $L^2\left(\mathcal{M}^{\otimes(p+q)},{\rm tr}^{\otimes(p+q)} \right)$ defined by (\ref{Unitary_permutation}), and let $\mathfrak{S}_p\times\mathfrak{S}_q=\left\{ s\in\mathfrak{S}_{p+q}:s\left\{ 1,2,\ldots,p \right\}=\left\{ 1,2,\ldots,p \right\} \right\}$. Denote by $e$ the unit in the group $\mathfrak{S}_{p+q}$.
The next lemma  is obvious from the definition of factor $\mathcal{F}$.
\begin{Lm}\label{automorphism}
For each $s\in\mathfrak{S}_p\times\mathfrak{S}_q$ the map $\;\;\;\mathcal{F}\ni a\mapsto \mathcal{P}_{p+q}(s)a\mathcal{P}_{p+q}(s^{-1})\;=\;\;$ $\theta_{p+q}^s(a)$ is the automorphism of factor $\mathcal{F}$.
\end{Lm}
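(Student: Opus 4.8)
The plan is to exploit the fact that conjugation by the unitary $\mathcal{P}_{p+q}(s)$ is automatically a $*$-automorphism of the whole algebra $B\left(L^2\left(\mathcal{M}^{\otimes(p+q)},{\rm tr}^{\otimes(p+q)}\right)\right)$, so the only thing needing proof is that $\theta_{p+q}^s$ carries $\mathcal{F}$ bijectively onto $\mathcal{F}$. Since $\mathcal{F}=\mathcal{M}^{\otimes p}\otimes\left(\mathcal{M}^\prime\right)^{\otimes q}$ is generated as a von Neumann algebra by the single-slot operators $^k\!\mathfrak{l}(a)$ $\left(1\le k\le p\right)$ and $^k\!\mathfrak{r}(a)$ $\left(p<k\le p+q\right)$, and a $*$-automorphism commutes with the passage to the generated von Neumann algebra, it suffices to check that $\theta_{p+q}^s$ maps each such generator back into $\mathcal{F}$.

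First I would establish the slot-permutation formula
\begin{eqnarray*}
\mathcal{P}_{p+q}(s)\,{^k\!A}\,\mathcal{P}_{p+q}(s^{-1})={^{s(k)}\!A},
\end{eqnarray*}
where $A$ is any bounded operator placed in the $k$-th tensor factor. This is a direct index computation from the definition (\ref{Unitary_permutation}): applying $\mathcal{P}_{p+q}(s^{-1})$ relabels the tensor slots, the operator $A$ then acts on the slot that originally sat in position $k$, and $\mathcal{P}_{p+q}(s)$ relabels back, so the net effect is the action of $A$ on slot $s(k)$. I expect this bookkeeping, though routine, to be the part that deserves the most care; one verifies it on elementary tensors $x_1\otimes\cdots\otimes x_{p+q}$ and reads off the result.

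With this formula in hand the lemma follows from the defining property of the subgroup $\mathfrak{S}_p\times\mathfrak{S}_q$. If $s$ preserves $\{1,2,\ldots,p\}$, then it also preserves $\{p+1,\ldots,p+q\}$; hence for $k\le p$ we get $\theta_{p+q}^s\left({^k\!\mathfrak{l}(a)}\right)={^{s(k)}\!\mathfrak{l}(a)}$ with $s(k)\le p$, which lies in $\mathcal{M}^{\otimes p}\subset\mathcal{F}$, while for $p<k\le p+q$ we get $\theta_{p+q}^s\left({^k\!\mathfrak{r}(a)}\right)={^{s(k)}\!\mathfrak{r}(a)}$ with $s(k)>p$, which lies in $\left(\mathcal{M}^\prime\right)^{\otimes q}\subset\mathcal{F}$. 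Thus generators are sent to generators and $\theta_{p+q}^s(\mathcal{F})\subseteq\mathcal{F}$. Applying the same reasoning to $s^{-1}\in\mathfrak{S}_p\times\mathfrak{S}_q$, and noting that $\mathcal{P}_{p+q}(s^{-1})=\mathcal{P}_{p+q}(s)^{*}$ gives $\theta_{p+q}^{s^{-1}}=\left(\theta_{p+q}^s\right)^{-1}$, yields the reverse inclusion, so $\theta_{p+q}^s$ restricts to a $*$-automorphism of $\mathcal{F}$.

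Finally I would remark that the hypothesis $s\in\mathfrak{S}_p\times\mathfrak{S}_q$ is essential and is precisely what makes the statement true: a permutation mixing the first $p$ slots with the last $q$ would send a left-multiplication generator into a slot $s(k)>p$, where $\mathcal{F}$ carries $\mathcal{M}^\prime$ acting by right multiplication rather than $\mathcal{M}$ acting by left multiplication, and the image would escape $\mathcal{F}$. Hence no deeper structural input (such as the commutation theorem $\mathcal{F}^\prime=J\mathcal{F}J$) is needed here; the whole content is the slot-permutation formula together with the left/right separation built into the definition of $\mathcal{F}$, which is why the lemma is essentially immediate.
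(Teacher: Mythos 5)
Your proof is correct and is essentially the paper's own approach: the paper gives no argument at all, saying only that the lemma ``is obvious from the definition of factor $\mathcal{F}$,'' and your slot-permutation formula $\mathcal{P}_{p+q}(s)\,{^k\!A}\,\mathcal{P}_{p+q}(s^{-1})={^{s(k)}\!A}$ together with the observation that $s\in\mathfrak{S}_p\times\mathfrak{S}_q$ preserves the left-multiplication and right-multiplication blocks is exactly the routine verification behind that claim. Nothing further is needed.
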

\begin{Lm}\label{outer_aut}
If $s$ is any non-identical element from $\mathfrak{S}_p\times\mathfrak{S}_q$ then ${\rm Ad}\,\mathcal{P}_{p+q}$ is the outer automorphism of factor $\mathcal{F}$.
\end{Lm}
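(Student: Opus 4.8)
The plan is to prove outerness by showing that $\theta_{p+q}^s$ acts \emph{freely}, which for a factor is equivalent to outerness. Concretely, I argue by contradiction: suppose $\theta_{p+q}^s={\rm Ad}\,v$ for a unitary $v\in\mathcal{F}$, so that $vc=\theta_{p+q}^s(c)\,v$ for every $c\in\mathcal{F}$, and I deduce $v=0$, contradicting that $v$ is unitary. The idea is to feed a single well-chosen unitary into this intertwining relation and exploit that the permutation genuinely separates one tensor leg from another.

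Since $s\in\mathfrak{S}_p\times\mathfrak{S}_q$ is non-identical, it moves some index inside one of the two blocks: there is $k_0$ with $k_1:=s(k_0)\neq k_0$, where $k_0,k_1$ lie both in $\{1,\dots,p\}$ or both in $\{p+1,\dots,p+q\}$. The factor attached to that block ($\mathcal{M}$ or $\mathcal{M}'$) is of type ${\rm II}_1$, hence contains a Haar unitary $u$ (for instance a generator of a diffuse abelian subalgebra), i.e. ${\rm tr}(u^n)=0$ for $n\neq0$; then the powers $u^n$ are orthonormal in $L^2$ and converge weakly to $0$. Let $w$ be $u$ placed in the $k_0$-th leg, so that $\theta_{p+q}^s(w)=w'$, where $w'$ is $u$ placed in the $k_1$-th leg. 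Plugging $c=w$ into the intertwining relation yields $vw=w'v$, and since $w,w'$ are unitary this iterates to $v=w'^{\,n}\,v\,w^{*n}$ for all $n\in\mathbb{N}$.

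Now pass to the vector $\widehat v=v\,{\rm I}\in L^2\bigl(\mathcal{F},{\rm tr}^{\otimes(p+q)}\bigr)$ and pair it with an element $e_{\mathbf m}=b_{m_1}\otimes\cdots\otimes b_{m_{p+q}}$ of the product orthonormal basis assembled from orthonormal bases of the individual legs. Because left and right multiplication by the unitaries $w'^{\,n},\,w^{*n}$ are unitary operators on $L^2$, the relation $\widehat v=w'^{\,n}\,\widehat v\,w^{*n}$ gives
\[
\langle \widehat v,e_{\mathbf m}\rangle=\langle w'^{\,n}\,\widehat v\,w^{*n},\,e_{\mathbf m}\rangle=\langle \widehat v,\;w'^{\,-n}\,e_{\mathbf m}\,w^{\,n}\rangle .
\]
Here $w'^{\,-n}\,e_{\mathbf m}\,w^{\,n}$ is again a simple tensor whose $k_0$-th leg equals $b_{m_{k_0}}u^{\,n}$, hence weakly null, while all remaining legs stay fixed and bounded; consequently $w'^{\,-n}\,e_{\mathbf m}\,w^{\,n}\rightharpoonup 0$ weakly in $L^2(\mathcal{F})$. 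The left-hand side being independent of $n$, this forces $\langle\widehat v,e_{\mathbf m}\rangle=0$ for every $\mathbf m$, so $\widehat v=0$ and $v=0$, the desired contradiction.

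The only genuinely delicate point is the weak-convergence step: one must verify that right multiplication of a fixed simple tensor by $u^n$ in a single leg tends weakly to $0$. This reduces to the Riemann–Lebesgue-type fact that $b\,u^{n}\rightharpoonup0$ for a Haar unitary $u$ (orthonormality of $\{u^n\}$), combined with the observation that a simple tensor one of whose factors is weakly null, the others remaining bounded, is itself weakly null; density of simple tensors and uniform boundedness then give weak convergence on all of $L^2(\mathcal{F})$. Everything else is formal: the reduction to the existence of one moved leg inside a block, the production of a Haar unitary inside a ${\rm II}_1$ factor, and the iteration $v=w'^{\,n}\,v\,w^{*n}$.
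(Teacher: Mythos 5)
Your proposal is correct, but it takes a genuinely different route from the paper's proof. Both arguments run by contradiction: one assumes a unitary in $\mathcal{F}$ implements $\theta_{p+q}^s$ and shows it must be $0$ by killing all of its ``coefficients'' against a total family of vectors. The devices, however, differ. The paper assumes $s(1)=i\neq 1$, and for each $N$ inserts a partition of unity ${\rm I}=\sum_{j=1}^{N}p_j$, ${\rm tr}(p_j)=1/N$, into the first leg; the intertwining relation turns ${\rm tr}^{\otimes(p+q)}\left({}^1p_j\,U X\,{}^1p_j\right)$ into ${\rm tr}^{\otimes(p+q)}\left({}^1p_j\cdot{}^i\!\left(u_1p_ju_1^*\right)UX\right)$, each such term is bounded by ${\rm tr}(p_j)\,{\rm tr}(u_1p_ju_1^*)=1/N^2$, and summing gives $\left|{\rm tr}^{\otimes(p+q)}(UX)\right|\le 1/N$ for every elementary tensor of unitaries $X$, hence $U=0$. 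You instead put a single Haar unitary in a moved leg, iterate the intertwining relation to $v=w'^{\,n}v\,w^{*n}$, and use the Riemann--Lebesgue/orthonormality fact that $u^n\rightharpoonup 0$ to annihilate the coefficients of $\widehat v$ against a product orthonormal basis of $L^2(\mathcal{F})$. Both devices use exactly the diffuseness of a ${\rm II}_1$-factor (equal-trace finite partitions of unity for the paper, a Haar unitary for you); the paper's argument is finitary and quantitative (an explicit $1/N$ bound, all estimates at the level of the trace), while yours is a softer mixing-type argument carried out in $L^2$, arguably more conceptual and equally general. One small inaccuracy to fix in your write-up: in $w'^{-n}e_{\mathbf m}w^{n}$ it is not true that ``all remaining legs stay fixed'' --- the $k_1$-th leg becomes $u^{-n}b_{m_{k_1}}$ --- but that leg stays bounded in $L^2$-norm, and boundedness of the unaffected-by-weak-nullity legs is all that your tensor-product weak-convergence principle requires, so the argument goes through unchanged.
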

\begin{proof}
On the contrary, suppose that there exists the unitary operator $U\in \mathcal{F}$ such that
\begin{eqnarray}\label{inner}
\mathcal{P}_{p+q}(s)a\mathcal{P}_{p+q}(s^{-1})\;=UaU^* \;\text{ for all } a\in\mathcal{F}.
\end{eqnarray}
Let us prove that $U=0$. For this, it suffices to show that
\begin{eqnarray}\label{orthogonality}
{\rm tr}^{\otimes(p+q)}\left(U\left(u_1\otimes u_2\otimes\ldots\otimes u_{p+q} \right) \right)=0 \;\text{ for all unitary }\; u_j\in\mathcal{M}.
\end{eqnarray}
Prove that for any natural number $N$
\begin{eqnarray}\label{inequality}
\left|{\rm tr}^{\otimes(p+q)}\left(U\left(u_1\otimes u_2\otimes\ldots\otimes u_{p+q} \right) \right)\right|\;\leq \frac{1}{N}.
\end{eqnarray}
To this purpose we find the pairwise orthogonal projections $p_j\in\mathcal{M}$, $j=1,2,\ldots,N$ with the properties
\begin{eqnarray}\label{projections}
\sum\limits_{j=1}^N p_j={\rm I},\;\; {\rm tr}\left(p_j \right)=\frac{1}{N} \;\text{ for all }
j=1,2,\ldots,N.
\end{eqnarray}
Set $\,^k\!p_j=\cdots\otimes{\rm I}\otimes\cdots\otimes{\rm I}\otimes\stackrel{k}{p_j}\otimes{\rm I}\cdots $.
Without loss generality we can assume that $s(1)=i\neq1$. Then
\begin{eqnarray*}
&\left|{\rm tr}^{\otimes(p+q)}\left(U\left(u_1\otimes u_2\otimes\ldots\otimes u_{p+q} \right) \right)\right|\\
&\stackrel{(\ref{projections})}{=}\sum\limits_{j=1}^N\left|{\rm tr}^{\otimes(p+q)}\left(\,^1\!\!p_j\,U\left(u_1\otimes u_2\otimes\ldots\otimes u_{p+q} \right) \right)\right|\\
&=\sum\limits_{j=1}^N\left|{\rm tr}^{\otimes(p+q)}\left(\,^1\!\!p_j\,U\left(u_1\otimes u_2\otimes\ldots\otimes u_{p+q} \right)\,^1\!\!p_j \right)\right|\\
&\stackrel{(\ref{inner})}{=}\sum\limits_{j=1}^N\left|{\rm tr}^{\otimes(p+q)}\left(\,^1\!\!p_j\,\cdot\,^i\!\left( u_1p_ju_1^* \right)U\left(u_1\otimes u_2\otimes\ldots\otimes u_{p+q} \right) \right)\right|\\
&\leq \sum\limits_{j=1}^N{\rm tr}^{\otimes(p+q)}\left(\,^1\!\!p_j\,\cdot\,^i\!\left( u_1p_ju_1^* \right) \right)=\sum\limits_{j=1}^N{\rm tr}\left(p_j \right){\rm tr}\left(u_1p_ju_1^* \right)\stackrel{(\ref{projections})}{=} \,\frac{1}{N}.
\end{eqnarray*}
This establishes  (\ref{inequality}) and (\ref{orthogonality}).
\end{proof}
To simplify notation, we will write $\theta_s$ instead $\theta_{p+q}^s={\rm Ad}\,\mathcal{P}_{p+q}(s)$ (see lemma \ref{automorphism}).

Now we consider the crossed product $\mathcal{F}\rtimes_\theta \left(\mathfrak{S}_p\times\mathfrak{S}_q \right)$ of the factor $\mathcal{F}$ by the finite group $\mathfrak{S}_p\times\mathfrak{S}_q $ acting via $\theta: s\in \mathfrak{S}_p\times\mathfrak{S}_q \mapsto \theta_s \in{\rm Aut}\,\mathcal{F}$.

Von Neumann algebra $\mathcal{F}\rtimes_\theta \left(\mathfrak{S}_p\times\mathfrak{S}_q \right)$ is generated in Hilbert space $l^2\left(G,\mathcal{H} \right)$, where $G=\mathfrak{S}_p\times\mathfrak{S}_q$, $\mathcal{H}=L^2\left(\mathcal{M}^{\otimes(p+q)},{\rm tr}^{\otimes(p+q)} \right)$, by the operators $\Pi_\theta(a)$, $a\in F$ and $\lambda_g$, $g\in G$, which act as follows
\begin{eqnarray}\label{cross_F}
\begin{split}
&\left(\Pi_\theta(a)\eta \right)(g)=\theta_{g^{-1}}(a)\eta(g),\, \eta\in l^2\left(G,\mathcal{H} \right),\\
&\left(\lambda_s\eta \right)(g)=\eta\left(s^{-1}g \right),\;s\in G.
\end{split}
\end{eqnarray}

\begin{Rem}\label{r1}
Let $a\in L^2\left(M^{\otimes(p+q)},{\rm tr}^{\otimes(p+q)} \right)$. Set $\xi_a(g)=\left\{\begin{array}{rl} a& \text{ if } g= e\\
0& \text{ if } g\neq e\end{array}\right.$. It is easy to check that $\xi_{\rm I}$ is the cyclic vector for $\mathcal{F}\rtimes_\theta \left(\mathfrak{S}_p\times\mathfrak{S}_q \right)$. Namely, the
set of the vectors $A\xi_{\rm I}$, $a\in \mathcal{F}\rtimes_\theta \left(\mathfrak{S}_p\times\mathfrak{S}_q \right)$ is dense in $l^2\left(G,\mathcal{H} \right)$. In addition, the functional  $\hat{\tau}$  defined on $A=\sum\limits_{s\in \mathfrak{S}_p\times\mathfrak{S}_q} \Pi_\theta(a_s)\cdot \lambda_s\in  \mathcal{F}\rtimes_\theta \left(\mathfrak{S}_p\times\mathfrak{S}_q \right)$ by
$$\hat{\tau}(A)=\left(A\xi_{\rm I}, \xi_{\rm I} \right)={\rm tr}^{\otimes(p+q)}(a_e),$$
is the faithful normal trace on $\mathcal{F}\rtimes_\theta \left(\mathfrak{S}_p\times\mathfrak{S}_q \right)$.
\end{Rem}
Denote von Neumann algebra $\mathcal{F}\rtimes_\theta \left(\mathfrak{S}_p\times\mathfrak{S}_q \right)$ by $\,^\theta\!\mathcal{F}$.
\begin{Rem}\label{r2}
The involution: $\,^\theta\!\mathcal{F}\xi_{\rm I}\ni A\xi_{\rm I}\stackrel{\hat{J}}{\mapsto} A^*\xi_{\rm I}$  extends to the antilinear isometry. It follows immediately  that $\left(\hat{J}\eta \right)(x)=\theta_{x^{-1}}\left(b_{x^{-1}}^* \right)$, $\eta\in l^2(G,\mathcal{H})$. The operators $\Pi_\theta^\prime(a)=\hat{J}\Pi_\theta(a)\hat{J}$, $a\in\mathcal{F}$ and $\lambda_s^\prime=\hat{J}\lambda_s \hat{J}$, $s\in \mathfrak{S}_p\times\mathfrak{S}_q$ act by
\begin{eqnarray}
\begin{split}
&\left(\Pi_\theta^\prime(a)\eta \right)(g)=JaJ\eta(g),\, \eta\in l^2\left(G,\mathcal{H} \right),\\
&\left(\lambda_s^\prime\eta \right)(g)=\theta_s\left(\eta\left(gs\right) \right),\;s\in G.
\end{split}
\end{eqnarray}
The equality  $\hat{J}\,^\theta\!\mathcal{F}\hat{J}=\,^\theta\!\mathcal{F}^\prime$ is true. In particular, the vector $\xi_{\rm I}$ is cyclic for $\,^\theta\!\mathcal{F}^\prime$. Set ${\hat{\tau}}^\prime(A)=\hat{\tau}\left(\hat{J}A\hat{J}\right)$, where $A\in \,^\theta\!\mathcal{F}^\prime$. Then  ${\hat{\tau}}^\prime$ is the faithful normal trace on $\,^\theta\!\mathcal{F}^\prime$.
\end{Rem}
\begin{Lm}\label{factor}
Von Neumann algebra $\,^\theta\!\mathcal{F}$ \  $\left(\,^\theta\!\mathcal{F}^\prime\right)$ is ${\rm II}_1$-factor.
\end{Lm}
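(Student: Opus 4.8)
The plan is to invoke the standard principle that the crossed product of a factor by a finite group acting through \emph{outer} automorphisms is again a factor, and then to read off the type from the trace already exhibited in Remark~\ref{r1}. A pleasant simplification here is that $G=\mathfrak{S}_p\times\mathfrak{S}_q$ is finite, so the algebraic crossed product is already $\sigma$-weakly closed: every element of $\,^\theta\!\mathcal{F}$ is a \emph{finite} sum $A=\sum_{s\in G}\Pi_\theta(a_s)\lambda_s$ with uniquely determined coefficients $a_s\in\mathcal{F}$, and no $L^2$-completion subtlety intervenes. I will use the covariance relation $\lambda_s\Pi_\theta(b)\lambda_s^*=\Pi_\theta(\theta_s(b))$, which follows directly from (\ref{cross_F}), together with the resulting product rule $\Pi_\theta(a)\lambda_s\cdot\Pi_\theta(b)\lambda_t=\Pi_\theta(a\,\theta_s(b))\lambda_{st}$.

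First I would show that $\,^\theta\!\mathcal{F}$ is a factor by computing its center. Let $Z=\sum_{s\in G}\Pi_\theta(a_s)\lambda_s$ be central. Since $Z$ commutes with every $\Pi_\theta(b)$, $b\in\mathcal{F}$, comparing the coefficients of $\lambda_s$ on the two sides of $\Pi_\theta(b)Z=Z\Pi_\theta(b)$ gives
\begin{equation*}
b\,a_s=a_s\,\theta_s(b)\qquad\text{for all }b\in\mathcal{F},\ s\in G.
\end{equation*}
For $s=e$ this says that $a_e$ commutes with all of $\mathcal{F}$, so $a_e\in\mathbb{C}\,{\rm I}$ because $\mathcal{F}$ is a factor. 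For $s\neq e$, taking adjoints (and replacing $b$ by $b^*$) yields $a_s^*b=\theta_s(b)\,a_s^*$; combining the two relations gives $b\,(a_sa_s^*)=a_s\theta_s(b)a_s^*=(a_sa_s^*)\,b$ and, similarly, $\theta_s(b)\,(a_s^*a_s)=(a_s^*a_s)\,\theta_s(b)$. Hence both $a_sa_s^*$ and $a_s^*a_s$ are central in $\mathcal{F}$, so $a_sa_s^*=c\,{\rm I}$ and $a_s^*a_s=c'\,{\rm I}$ with $c,c'\geq0$.

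The decisive point, and the one I expect to be the main obstacle, is excluding $c\neq0$. If $c\neq0$ then, since $\mathcal{F}$ is finite, the trace identity ${\rm tr}^{\otimes(p+q)}(a_sa_s^*)={\rm tr}^{\otimes(p+q)}(a_s^*a_s)$ forces $c=c'$, so $u=c^{-1/2}a_s$ is a unitary in $\mathcal{F}$ and the intertwining relation becomes $bu=u\,\theta_s(b)$, i.e.\ $\theta_s={\rm Ad}\,u^*$ is inner. This contradicts Lemma~\ref{outer_aut}. Therefore $a_s=0$ for every $s\neq e$, and $Z=a_e\in\mathbb{C}\,{\rm I}$, so the center of $\,^\theta\!\mathcal{F}$ is trivial.

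Finally I would determine the type. By Remark~\ref{r1} the functional $\hat{\tau}$ is a faithful normal \emph{finite} trace, so $\,^\theta\!\mathcal{F}$ is a finite factor, hence of type ${\rm I}_n$ or ${\rm II}_1$; as it contains the infinite-dimensional subalgebra $\Pi_\theta(\mathcal{F})\cong\mathcal{F}$, it cannot be ${\rm I}_n$, and so $\,^\theta\!\mathcal{F}$ is a ${\rm II}_1$-factor. For the commutant, Remark~\ref{r2} supplies the antilinear isometry $\hat{J}$ with $\hat{J}\,^\theta\!\mathcal{F}\hat{J}=\,^\theta\!\mathcal{F}^\prime$; since $A\mapsto\hat{J}A\hat{J}$ is a conjugate-linear $*$-isomorphism, it carries the trivial center of $\,^\theta\!\mathcal{F}$ onto the center of $\,^\theta\!\mathcal{F}^\prime$ and transports $\hat{\tau}$ to the finite faithful normal trace $\hat{\tau}^\prime$. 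Thus $\,^\theta\!\mathcal{F}^\prime$ is a ${\rm II}_1$-factor as well.
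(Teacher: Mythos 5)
Your proof is correct and follows essentially the same route as the paper's: both compute the center via the unique decomposition $A=\sum_{g\in G}\Pi_\theta(a_g)\lambda_g$, derive the intertwining relation $b\,a_g=a_g\,\theta_g(b)$, show that $a_g^*a_g$ (and $a_ga_g^*$) is a scalar so that a nonzero $a_g$ would yield a unitary implementing $\theta_g$, and invoke Lemma~\ref{outer_aut} to force $a_g=0$ for $g\neq e$. Your write-up is in fact slightly more complete than the paper's, since you explicitly handle the degenerate case $a_ga_g^*=0$, the determination of type ${\rm II}_1$ from the trace of Remark~\ref{r1}, and the commutant statement via $\hat{J}$, all of which the paper leaves implicit.
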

\begin{proof}
It follows from remarks \ref{r1} and \ref{r2} that any operator $A\in\,^\theta\!\mathcal{F}$ has a unique decomposition $A=\sum\limits_{g\in G}\Pi_\theta(a_g)\lambda_g$. Thus, if $A$ lies in the centrum of $\,^\theta\!\mathcal{F}$ then
\begin{eqnarray*}
\Pi_\theta(a_g)\lambda_g \cdot \Pi_\theta(b) =\Pi_\theta(b)\cdot\Pi_\theta(a_g)\lambda_g \;\text{ for all }\;g\in G \;\text{ and }\; b\in \mathcal{F}.
\end{eqnarray*}
Hence, using (\ref{cross_F}), we obtain
\begin{eqnarray}\label{rel_inner}
a_g\cdot\theta_g(b) =b\cdot a_g  \;\text{ for all }\;g\in G \;\text{ and }\; b\in \mathcal{F}.
\end{eqnarray}
Therefore, we have
\begin{eqnarray*}
a_g^*a_g\theta_g(b) =a_g^*b a_g , \;\;\theta_g(b^*)a_g^*a_g =a_g^*b^* a_g \;\text{ for all }\;g\in G \;\text{ and }\; b\in \mathcal{F}.
\end{eqnarray*}
Hence, we conclude
\begin{eqnarray*}
a_g^*a_g\in \mathcal{F}\cap\mathcal{F}^\prime=\mathbb{C}{\rm I} \;\text{ for all }\;g\in G.
\end{eqnarray*}
We thus get $a_g=z_gu_g$, where $z_g\in\mathbb{C}$, $u_g$ is the unitary operator from $\mathcal{F}$.
Assuming $g\neq e$, we obtain from (\ref{rel_inner}) $$\theta_g(b) =u_g^*\cdot b\cdot u_g$$ for all $b\in\mathcal{F}$. It follows from lemma \ref{outer_aut} that $a_g=0$ for all $g\neq e$. But, by (\ref{rel_inner}), $a_e\in\mathcal{F}\cap\mathcal{F}^\prime=\mathbb{C}{\rm I}$. Therefore, $A\in \mathbb{C}{\rm I}$.
\end{proof}
Let $P=\frac{1}{|G|}\sum\limits_{g\in G} \lambda_g$. We will identify $\eta \in\mathcal{H}=L^2\left(\mathcal{M}^{\otimes (p+q)},{\rm tr}^{\otimes (p+q)} \right)$ with the function $\tilde{\eta}$ $\in$ $l^2(\mathcal{H},G)$ defined by: $\tilde{\eta}(g)=\eta$  for all $g\in G$. Define the unitary operator $U_g$ on $\mathcal{H}$ by
\begin{eqnarray*}
U_g\eta = \theta_g(\eta), \;\;\eta \in \mathcal{M}^{\otimes (p+q)},
\end{eqnarray*}
where $\theta_g$ denote the automorphism ${\rm Ad}\,\mathcal{P}_{p+q}(g)$ (see lemma \ref{automorphism}).
It is easy to check  that
\begin{eqnarray}\label{projection_relations}
\begin{split}
&P \, l^2\left(\mathcal{H},G \right)=\mathcal{H},
\;\;\; P\cdot\,^\theta\!\mathcal{F}\,\cdot P=\left\{a\in\mathcal{F}:\theta_g(a)=a\;\text{ for all }\;g\in G  \right\}=\mathcal{F}^G,\\
&P\lambda_g P={\rm I} \;\text{ for all }\; g\in G,\;\;\; P\Pi_\theta (a)P= \frac{1}{|G|}\sum\limits_{g\in G}\theta_g(a)\in \mathcal{F}\\
&P\lambda_g^\prime P\eta =\theta_g(\eta)=U_g\eta,\;\;\; P\Pi_\theta^\prime (a)P\eta= JaJ\eta=\eta a^*, \text{ where }\;\;\eta\in\mathcal{H}.
\end{split}
\end{eqnarray}
We sum up this discussion  in the following.
\begin{Lm}\label{generated}
Von Neumann algebra $\left(\mathcal{F}^G\right)^\prime$ is generated by $\mathcal{F}^\prime$ and $\left\{  U_g \right\}_{g\in G}$.
\end{Lm}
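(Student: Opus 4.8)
The plan is to transport the computation from the crossed product $\,^\theta\!\mathcal{F}$, acting on $l^2(G,\mathcal{H})$, down to $\mathcal{H}$ by compressing with the averaging projection $P$, and then to read off the commutant by means of the reduction theorem for von Neumann algebras. First I observe that $P=\frac{1}{|G|}\sum_{g\in G}\lambda_g$ belongs to $\,^\theta\!\mathcal{F}$, and that by (\ref{projection_relations}) we have $P\,l^2(G,\mathcal{H})=\mathcal{H}$ together with $P\cdot{}^\theta\!\mathcal{F}\cdot P=\mathcal{F}^G$ as operators on $\mathcal{H}$. Since $P\in{}^\theta\!\mathcal{F}$, the projection $P$ commutes with the whole commutant $\,^\theta\!\mathcal{F}^\prime$, so $\,^\theta\!\mathcal{F}^\prime$ leaves the subspace $\mathcal{H}=P\,l^2(G,\mathcal{H})$ invariant. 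The reduction theorem applied to $N={}^\theta\!\mathcal{F}$ with the projection $P\in N$ then gives, with all commutants computed in $B(\mathcal{H})$,
$\left(\mathcal{F}^G\right)^\prime=\left(P\cdot{}^\theta\!\mathcal{F}\cdot P\right)^\prime={}^\theta\!\mathcal{F}^\prime|_{\mathcal{H}}.$

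It remains to describe $\,^\theta\!\mathcal{F}^\prime|_{\mathcal{H}}$ concretely. Because $P$ commutes with every element of $\,^\theta\!\mathcal{F}^\prime$, each $x\in{}^\theta\!\mathcal{F}^\prime$ leaves $\mathcal{H}$ invariant and the restriction map $x\mapsto x|_{\mathcal{H}}=PxP|_{\mathcal{H}}$ is a normal $\ast$-homomorphism of $\,^\theta\!\mathcal{F}^\prime$ onto $\,^\theta\!\mathcal{F}^\prime|_{\mathcal{H}}$. A normal $\ast$-homomorphism carries a generating set to a generating set, so the image is the von Neumann algebra generated by the restrictions of the generators $\Pi_\theta^\prime(a)$ $(a\in\mathcal{F})$ and $\lambda_g^\prime$ $(g\in G)$ of $\,^\theta\!\mathcal{F}^\prime$ recorded in remark \ref{r2}. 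By the last two relations in (\ref{projection_relations}) these restrictions are $\Pi_\theta^\prime(a)|_{\mathcal{H}}=JaJ$ and $\lambda_g^\prime|_{\mathcal{H}}=U_g$; and as $a$ runs over $\mathcal{F}$ the operators $JaJ$ run over $J\mathcal{F}J=\mathcal{F}^\prime$. Hence $\,^\theta\!\mathcal{F}^\prime|_{\mathcal{H}}$ is precisely the von Neumann algebra generated by $\mathcal{F}^\prime$ and $\left\{U_g\right\}_{g\in G}$, which combined with the displayed identity of the first paragraph proves the lemma.

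The step requiring care is the joint use of the reduction theorem and the multiplicativity of the restriction map, both of which rest on the single fact that $P\in{}^\theta\!\mathcal{F}$ and hence commutes with $\,^\theta\!\mathcal{F}^\prime$. For multiplicativity one checks, for $x,y\in{}^\theta\!\mathcal{F}^\prime$, that $(xy)|_{\mathcal{H}}=xyP|_{\mathcal{H}}=xPyP|_{\mathcal{H}}=(x|_{\mathcal{H}})(y|_{\mathcal{H}})$, where the middle equality uses $Py=yP$; normality then guarantees that the map sends the $\sigma$-weak closure of the $\ast$-algebra generated by the generators onto the $\sigma$-weak closure of the $\ast$-algebra generated by their images. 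Everything else is a direct substitution of the explicit formulas collected in (\ref{projection_relations}), so no separate estimates are needed.
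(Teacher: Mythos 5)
Your proof is correct and takes essentially the same route as the paper's: the paper's own (very terse) argument likewise identifies $\left(\mathcal{F}^G\right)^\prime$ with the compression of $\,^\theta\!\mathcal{F}^\prime$ by $P$, invokes remark \ref{r2} for the generators $\Pi_\theta^\prime(a)$, $\lambda_g^\prime$ of $\,^\theta\!\mathcal{F}^\prime$, and reads off their images $JaJ$ and $U_g$ from (\ref{projection_relations}). You have simply made explicit the steps the paper leaves implicit --- the reduction theorem for $P\in{}^\theta\!\mathcal{F}$ and the fact that a normal $\ast$-homomorphism carries generators to generators --- which is a faithful filling-in of the same argument rather than a different one.
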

\begin{proof}
According to remark \ref{r2}, von Neumann algebra $\,^\theta\!\mathcal{F}^\prime$ is generated by the operators  $\Pi_\theta^\prime (a),\;a\in \mathcal{F}$ and $\lambda_g^\prime,\; g\in G$. Hence, using (\ref{projection_relations}), we obtain the desired conclusion.
\end{proof}
\begin{Lm}
$\left(\mathcal{F}^G\right)^\prime\cap \mathcal{F}=\mathbb{C}{\rm I}$.
\end{Lm}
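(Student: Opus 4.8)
The plan is to read $(\mathcal{F}^G)^\prime$ off from Lemma \ref{generated} and run a Fourier‑coefficient argument in the crossed product, with the outerness of Lemma \ref{outer_aut} furnishing the decisive obstruction. By Lemma \ref{generated}, $(\mathcal{F}^G)^\prime$ is the von Neumann algebra generated by $\mathcal{F}^\prime$ and the unitaries $U_g$, $g\in G$, where $U_g=\mathcal{P}_{p+q}(g)$ implements $\theta_g$; thus $U_gU_h=U_{gh}$, $U_e=\mathrm{I}$, and $U_g b U_g^{*}=\theta_g^\prime(b)\in\mathcal{F}^\prime$ for $b\in\mathcal{F}^\prime$, where $\theta_g^\prime:=\mathrm{Ad}\,U_g|_{\mathcal{F}^\prime}$. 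Since $G$ is finite and, by Lemma \ref{factor}, $\,^\theta\!\mathcal{F}$ is a factor, the projection $P$ has full central support in $\,^\theta\!\mathcal{F}$, so compression by $P$ is faithful and identifies $(\mathcal{F}^G)^\prime$ with $\,^\theta\!\mathcal{F}^\prime$; in particular every element of $(\mathcal{F}^G)^\prime$ has a \emph{unique} expansion $\sum_{g\in G}b_g U_g$ with $b_g\in\mathcal{F}^\prime$, the coefficients being recovered from the trace of Remarks \ref{r1} and \ref{r2}.

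First I would fix $x\in(\mathcal{F}^G)^\prime\cap\mathcal{F}$ and expand $x=\sum_{g\in G}b_gU_g$ with $b_g\in\mathcal{F}^\prime$. Because $x\in\mathcal{F}=(\mathcal{F}^\prime)^\prime$, it commutes with every $b\in\mathcal{F}^\prime$; substituting the expansion and pushing $b$ through each $U_g$ gives $\sum_g b_g\theta_g^\prime(b)U_g=\sum_g b\,b_gU_g$, and the uniqueness of the expansion yields the intertwining relations
\[
b_g\,\theta_g^\prime(b)=b\,b_g\qquad\text{for all }b\in\mathcal{F}^\prime,\ g\in G.
\]
For $g=e$ this reads $b_eb=b\,b_e$ for all $b\in\mathcal{F}^\prime$, so $b_e\in Z(\mathcal{F}^\prime)=\mathbb{C}\mathrm{I}$, as $\mathcal{F}^\prime$ is a factor.

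Next I would show $b_g=0$ for $g\neq e$. Taking adjoints in the displayed relation gives the companion relation $\theta_g^\prime(b)\,b_g^{*}=b_g^{*}\,b$; combining the two (multiplying one by $b_g^{*}$ on the left and substituting the other) shows that both $b_g^{*}b_g$ and $b_gb_g^{*}$ commute with all of $\mathcal{F}^\prime$, hence lie in $Z(\mathcal{F}^\prime)=\mathbb{C}\mathrm{I}$. If $b_g\neq0$, then $b_g^{*}b_g=\nu_g\mathrm{I}$ with $\nu_g>0$ and $v_g:=\nu_g^{-1/2}b_g$ is a unitary of $\mathcal{F}^\prime$, and the intertwining relation becomes $\theta_g^\prime(b)=v_g^{*}bv_g$ for all $b\in\mathcal{F}^\prime$. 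Setting $W:=v_gU_g$, one finds $WbW^{*}=v_g\theta_g^\prime(b)v_g^{*}=b$ for $b\in\mathcal{F}^\prime$, so $W$ is a unitary in $\mathcal{F}^{\prime\prime}=\mathcal{F}$; moreover, for $a\in\mathcal{F}$,
\[
WaW^{*}=v_g\theta_g(a)v_g^{*}=\theta_g(a),
\]
since $v_g\in\mathcal{F}^\prime$ commutes with $\theta_g(a)\in\mathcal{F}$. Thus $\theta_g=\mathrm{Ad}\,W$ would be inner on $\mathcal{F}$, contradicting Lemma \ref{outer_aut} for $g\neq e$. Hence $b_g=0$ for all $g\neq e$, and therefore $x=b_e\in\mathbb{C}\mathrm{I}$, establishing $(\mathcal{F}^G)^\prime\cap\mathcal{F}=\mathbb{C}\mathrm{I}$.

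The matrix manipulations with the relations $b_g\theta_g^\prime(b)=b\,b_g$ and its adjoint are routine, as is the verification that $v_g$ is unitary. The only genuine obstacle is the passage from a nonzero Fourier coefficient $b_g$ to an \emph{inner} implementation of $\theta_g$ on $\mathcal{F}$, where the outerness of $\mathcal{P}_{p+q}(g)$ established in Lemma \ref{outer_aut} is exactly what produces the contradiction. A secondary point requiring care is the uniqueness of the Fourier expansion: I would justify it through the faithful compression by $P$, which is where the fact from Lemma \ref{factor} that $\,^\theta\!\mathcal{F}$ is a factor (so $P$ has central support $\mathrm{I}$) enters.
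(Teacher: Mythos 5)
Your proof is correct, and its skeleton coincides with the paper's: both expand an element of $\left(\mathcal{F}^G\right)^\prime\cap\mathcal{F}$ as $\sum_{g\in G}b_gU_g$ with $b_g\in\mathcal{F}^\prime$ (Lemma \ref{generated}, with uniqueness justified by the faithfulness of the compression $a\mapsto PaP$, which rests on Lemma \ref{factor}), derive the intertwining relation $b\,b_g=b_g\,\theta_g(b)$ for all $b\in\mathcal{F}^\prime$, show $b_g^*b_g$ and $b_gb_g^*$ are scalars, and conclude that a nonzero coefficient at $g\neq e$ yields a unitary $v_g\in\mathcal{F}^\prime$ with $\theta_g(b)=v_g^*bv_g$ on $\mathcal{F}^\prime$. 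Where you genuinely diverge is the end-game. The paper turns this into a contradiction by transferring outerness to the commutant: it invokes $J\mathcal{F}J=\mathcal{F}^\prime$ and $J\,U_g=U_g\,J$ to conclude that $\theta_g$ restricted to $\mathcal{F}^\prime$ is outer for $g\neq e$, so the existence of $v_g$ is immediately absurd. You instead form $W=v_gU_g$, note that $W$ commutes with every element of $\mathcal{F}^\prime$ and hence $W\in\mathcal{F}^{\prime\prime}=\mathcal{F}$ by the bicommutant theorem, and check that ${\rm Ad}\,W=\theta_g$ on $\mathcal{F}$ because $v_g\in\mathcal{F}^\prime$ commutes with $\theta_g(a)\in\mathcal{F}$; this contradicts Lemma \ref{outer_aut} exactly as stated, namely outerness of $\theta_g$ on $\mathcal{F}$ itself. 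Your variant buys self-containedness: it never uses the modular conjugation $J$ nor the commutation $J\,U_g=U_g\,J$ (which the paper asserts without proof), replacing that transfer step by the elementary observation that $v_gU_g$ lies in $\mathcal{F}$. The paper's route, in exchange, records the independently useful fact that $\theta_g$ is also outer on $\mathcal{F}^\prime$. Both arguments are complete; yours is arguably the cleaner application of Lemma \ref{outer_aut}.
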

\begin{proof}
Let $A\in\left(\mathcal{F}^G\right)^\prime\cap \mathcal{F}=\left(\mathcal{F}^G\right)^\prime\cap (\mathcal{F}^\prime)^\prime$. Lemma \ref{generated} assures that $A=\sum\limits_{g\in G}a_gU_g$, where $a_g\in\mathcal{F}^\prime$ for all $g$. Therefore,
\begin{eqnarray*}
\sum\limits_{g\in G}ba_gU_g=\sum\limits_{g\in G}a_gU_gb \;\text{for all }\; b\in\mathcal{F}^\prime.
\end{eqnarray*}
Hence, applying lemma \ref{factor} and (\ref{projection_relations}), we have
\begin{eqnarray*}
ba_gU_g=a_gU_gb \;\text{ for all }\; b\in\mathcal{F}^\prime \text{ and } g\in G.
\end{eqnarray*}
This means that
\begin{eqnarray}\label{equality_inner}
ba_g=a_g\theta_g(b) \;\text{ for all }\; b\in\mathcal{F}^\prime \text{ and } g\in G.
\end{eqnarray}
Now we recall that, by remark \ref{r2},
Since the relations $J\mathcal{F}J=\mathcal{F}^\prime$ and $J\,U_g=U_g\,J$ are true, we obtain from lemma \ref{outer_aut}  that
\begin{eqnarray*}
\mathcal{F}^\prime\ni b\mapsto U_g \, b\,U_g^*=\theta_g(b)\in \mathcal{F}^\prime
\end{eqnarray*}
is the outer automorphism of the factor $\mathcal{F}^\prime$ for all $g\neq e$.
Now as in the proof of lemma \ref{factor}, the equality (\ref{equality_inner}) gives that $A\in \mathbb{C}{\rm I}$.
\end{proof}
\paragraph*{The proof of Theorem \ref{main_theorem}(2-3).} We recall that $\mathcal{F}=\mathcal{M}^{\otimes p}\otimes\left(\mathcal{M}^\prime \right)^{\otimes q}$ and $G=\mathfrak{S}_p\times\mathfrak{S}_q$. By theorem \ref{main_theorem}(1), $\left\{\mathfrak{L}^{\otimes p}\otimes\mathfrak{R}^{\otimes q}\left( U(\mathcal{M})\right)\right\}^{\prime\prime} =\mathcal{F}^G$. It follows from (\ref{projection_relations}) and the lemmas \ref{factor}, \ref{generated} that $\mathcal{F}^G$ is a factor, and the map
\begin{eqnarray*}
^\theta\!\mathcal{F}^\prime\ni a\stackrel{R_P}{\mapsto} PaP\in\left(\mathcal{F}^G \right)^\prime
\end{eqnarray*}
is an isomorphism. Therefore, the formula
\begin{eqnarray}\label{trace_on_commutant}
\tau^\prime\left(a \right)= \left(R_p^{-1}(a)\xi_{\rm I},\xi_{\rm I} \right),\;\; a\in \left(\mathcal{F}^G \right)^\prime (\text{ see remarks \ref{r1} and \ref{r2} })
\end{eqnarray}
defines the  normal, normalized trace on  $\left(\mathcal{F}^G \right)^\prime$.

Since the projection $P^{\lambda\mu}=P_p^\lambda\otimes P_q^\mu$ lies in $\left(\mathcal{F}^G \right)^\prime$,
the map
\begin{eqnarray}
\left\{\mathfrak{L}^{\otimes p}\otimes\mathfrak{R}^{\otimes q}\left( U(\mathcal{M})\right)\right\}^{\prime\prime} =\mathcal{F}^G\ni a\stackrel{\mathfrak{I}_{pq}^{\lambda\mu}}{\mapsto} P^{\lambda\mu}aP^{\lambda\mu}\in P^{\lambda\mu}\mathcal{F}^GP^{\lambda\mu}
\end{eqnarray}
is an isomorphism. In particular, $\mathfrak{I}_{pq}^{\lambda\mu}\left( \left(\mathfrak{R}^{\otimes p}(u)\right)\otimes\left(\mathfrak{L}^{\otimes q}(u)\right)\right)=\Pi_{\lambda\mu}(u)$ for all $u\in U(\mathcal{M})$ and $(\lambda,\mu)\in \Upsilon_p\times\Upsilon_q$. This proves the property ({\bf 2}) from the theorem \ref{main_theorem}.

To prove the property ({\bf 3}), we notice that the projections $P_p^\lambda\otimes P_q^\mu$ and $P_p^\gamma\otimes P_q^\delta$ are in $\left(\mathcal{F}^G \right)^\prime$. It follows from (\ref{trace_on_commutant}) that
\begin{eqnarray*}
\tau^\prime\left(P_p^\lambda\otimes P_q^\mu \right)=\frac{{\rm dim}\,\lambda\cdot{\rm dim}\,\mu}{p!q!}.
\end{eqnarray*}
Thus, assuming that ${\rm dim}\,\lambda\cdot{\rm dim}\,\mu={\rm dim}\,\gamma\cdot{\rm dim}\,\delta$, we obtain
\begin{eqnarray*}
\tau^\prime\left(P_p^\lambda\otimes P_q^\mu \right)=\tau^\prime\left(P_p^\gamma\otimes P_q^\delta \right).
\end{eqnarray*}
Since $\left(\mathcal{F}^G \right)^\prime$ is a factor, there exist the partial isometry $\mathfrak{U}\in\left(\mathcal{F}^G \right)^\prime$ such that
\begin{eqnarray*}
\mathfrak{U}\mathfrak{U}^*=P_p^\lambda\otimes P_q^\mu\;\text{ and }\; \mathfrak{U}^*\mathfrak{U}= P_p^\gamma\otimes P_q^\delta.
\end{eqnarray*}
Hence we have
 \begin{eqnarray*}
 &\Pi_{\lambda\mu}(u)=P_p^\lambda\otimes P_q^\mu\; \left( \mathfrak{L}^{\otimes p}\otimes\mathfrak{R}^{\otimes q}(u)\right) \;P_p^\lambda\otimes P_q^\mu
 =\mathfrak{U}\mathfrak{U}^*  \; \left( \mathfrak{L}^{\otimes p}\otimes\mathfrak{R}^{\otimes q}(u)\right) \;\mathfrak{U}\mathfrak{U}^*\\
 &=\mathfrak{U}\;\mathfrak{U}^*\mathfrak{U}  \; \left( \mathfrak{L}^{\otimes p}\otimes\mathfrak{R}^{\otimes q}(u)\right) \;\mathfrak{U}^*= \Pi_{\gamma\delta}(u) \;\text{\ \  for any\ \ \  }\;u\in U(\mathcal{M}).
 \end{eqnarray*}

{}
B.Verkin Institute for Low Temperature Physics and Engineering\\n.nessonov@gmail.com
\end{document}